\documentclass[11pt,a4paper]{article} 

\usepackage{amsfonts, amsmath, wasysym}
\usepackage{stmaryrd}

\usepackage{tikz}
\usetikzlibrary{shapes, positioning, patterns}

\usepackage{amssymb,amsthm,
paralist
}

\usepackage{
latexsym,
}

\usepackage{url}

\definecolor{darkgreen}{rgb}{0,0.5,0}
\definecolor{darkred}{rgb}{0.7,0,0}
\usepackage[colorlinks, 
citecolor=darkgreen, linkcolor=darkred
]{hyperref}

\usepackage[a4paper, margin=1in]{geometry}

\theoremstyle{plain}

\numberwithin{equation}{section}

\newcommand{\beq}{\begin{equation}}
\newcommand{\beql}[1]{\begin{equation}\label{#1}}
\newcommand{\eeq}{\end{equation}}
\newcommand{\beqa}{\begin{equation}\begin{aligned}}
\newcommand{\eeqa}{\end{aligned}\end{equation}}
\newcommand{\brmk}{\begin{rmk}}
\newcommand{\ermk}{\end{rmk}}
\newcommand{\partref}[1]{\hbox{(\csname @roman\endcsname{\ref{#1}})}}

\newcommand{\Ric}{{\mathrm{Ric}}}

 \newtheorem{thm}{Theorem}[section]

\newtheorem{lem}[thm]{Lemma}
\newtheorem{prop}[thm]{Proposition}

\newtheorem{rmk}[thm]{Remark}

\def\GG{\mathrm{G}}

\renewcommand{\i}{\imath}
\renewcommand{\j}{\jmath}
\def\iRic{\i(\textnormal{Ric}_{0})}

\usepackage{varwidth}

\makeatletter
\renewcommand{\@makefntext}[1]{\noindent\makebox[0pt][l]{\@makefnmark}\hspace*{0.6em}#1}
\makeatother

\title{\Large Properties and Perturbations of Extremally Ricci-Pinched $G_{2}$-Structures\textsuperscript{*}}
\author{Aaron Kennon}
\date{24 September 2026}

\begin{document}

\maketitle

\begingroup
\renewcommand{\thefootnote}{*}
\footnotetext{The author is supported by the NSF via DMS-2342135.\par\noindent MSC 2020: 53E20, 53C20.}
\endgroup

\begin{abstract}
We study properties of Extremally Ricci-Pinched (ERP) $G_2$-Structures on compact $7$-manifolds. We provide characterizations of the ERP condition in terms of the traceless $\ast$-Ricci tensor, the Hodge Laplacian of the torsion, the Ricci eigenvalues, the Laplacian flow, and the $27$-dimensional Weyl curvature component. We also prove that the automorphism group of a compact ERP $G_2$-Structure is finite. We then identify perturbations of ERP $G_2$-Structures, both in general and for specific examples, that preserve the ERP condition.
\end{abstract}

\section{Introduction}

A primary goal of the Laplacian flow of $\GG_{2}$-Structures is to help identify conditions under which a $G_2$-Structure with torsion may be deformed to one which is torsion-free. Although the Laplacian flow in principle could be applied to any type of $G_2$-Structure, it is best motivated in the context of closed $\GG_{2}$-Structures since the closed condition is manifestly preserved under the flow and the flow is known to have short-time existence and uniqueness starting at closed initial data \cite{BryantXu2004}. More recently, long-time existence results have been established for the Laplacian flow of closed $\GG_{2}$-Structures, as well as dynamical stability and compactness results \cite{LotayWei2017, LotayWei2019}. \\

Within the class of closed $\GG_{2}$-Structures, the Extremally Ricci-Pinched (ERP) subclass constitute precisely how pinched the Ricci tensor of a closed $\GG_2$-Structure may be relative to its scalar curvature without it necessarily being torsion-free \cite{Bryant2005}. Besides their significance from the Riemannian perspective, the torsion two-form, which measures the deviation of a closed $\GG_{2}$-Structure from being torsion-free, also obeys several special identities for ERP $\GG_{2}$-Structures. These structures are also significant from the perturbative perspective as the Laplacian flow preserves this ERP condition \cite{FinoRaffero2021}. As a result, by forwards and backwards uniqueness, a closed $\GG_{2}$-Structure on a compact manifold cannot become ERP in finite time. In this paper we study the class of ERP $G_2$-Structures under the premise that understanding them may provide more insight into the class of closed $G_2$-Structures more broadly and that their behavior may be useful for gaining insight into when the Laplacian flow may exist for all time and converge to a torsion-free $\GG_2$-Structure.   \\

We begin by introducing background on $\GG_2$-Structures in \S2. In \S3 we give characterizations of the ERP condition using the traceless $\ast$-Ricci tensor, the Hodge Laplacian of the torsion, a pointwise bound on the Ricci eigenvalues, the Laplacian flow, and a pointwise bound on the $27$-dimensional Weyl curvature component. We also prove that the automorphism group is finite. In \S4 we use the data of an ERP $\GG_2$-Structure to define natural perturbations preserving the ERP condition. In \S5 we study perturbations of three examples in the classification of Lauret and Nicolini.

\section{Background on $\GG_{2}$-Structures}

Here we include a general introduction to $\GG_{2}$-Structures on seven-manifolds. The material here may be found in the review by Bryant \cite{Bryant2005}, the review by Karigiannis \cite{Karigiannis2010}, or the monograph by Joyce \cite{JoyceBook}. \\

A \textit{$\GG_{2}$-Structure} is a principle subbundle of the frame bundle with structure group $\GG_{2}$. The existence of such a structure without any further qualifications is just a topological matter, namely a seven-manifold admits $\GG_{2}$-Structures if and only if it is orientable and spin. We may also characterize the existence of a $\GG_{2}$-Structure in terms of a three-form $\varphi$ pointwise identified with a model three-form $\varphi_{0}$ on $\mathbb{R}^{7}$. 

\begin{equation}\label{G2pointwise}
\varphi_0 = dx^{123} + dx^{145} + dx^{167} + dx^{246} - dx^{257} -
dx^{347} - dx^{356}
\end{equation}

Such a three-form is called \textit{positive} and we also refer to such a form as the \textit{G$_2$ three-form}. A positive three-form algebraically defines a Riemannian metric $g$ and also thereby determines a four-form $\psi$ by taking the Hodge star of the three-form with respect to this metric. The metric is given in terms of the three-form $\varphi$ as

 \begin{equation}\label{G2metric}
X\lrcorner\varphi\wedge Y\lrcorner\varphi \wedge \varphi = 6g(X,Y)\textnormal{vol}
 \end{equation}

On a manifold with a positive three-form $\varphi$, we may decompose the spaces of differential k-forms into irreducible $\GG_{2}$ representations. The equations defining the various representations are then all given in terms of $\varphi$ and its Hodge dual $\psi$ \cite{Karigiannis2010}.

\begin{align}
 \hspace{-100mm} & \Lambda^{1}(T^{*}M) = \Lambda^{1}_{8}(T^{*}M) \\
& \Lambda^{2}(T^{*}M) = \Lambda^{2}_{7}(T^{*}M) \oplus \Lambda^{2}_{14}(T^{*}M) \\
& \Lambda^{3}(T^{*}M) = \Lambda^{3}_{1}(T^{*}M) \oplus \Lambda^{3}_{7}(T^{*}M) \oplus \Lambda^{3}_{27}(T^{*}M) 
\end{align}

\begin{align}\label{G2formdecompositions}
& \Lambda^{2}_{7}(T^{*}M) = \{ \beta \in \Lambda^{2}(T^{*}M) : *(\varphi \wedge \beta) = 2\beta \} \\
& \Lambda^{2}_{14}(T^{*}M) =  \{ \beta \in \Lambda^{2}(T^{*}M) : *(\varphi \wedge \beta) = -\beta \} \\ 
& \Lambda^{3}_{1}(T^{*}M)= \{  f\varphi \hspace{2mm} \textnormal{for} \hspace{1mm} f \in C^{\infty}(M) \} \\
& \Lambda^{3}_{7}(T^{*}M) =  \{ X \lrcorner \psi \hspace{2mm} \textnormal{for}\hspace{1mm} X \in \Gamma(M) \} \\
& \Lambda^{3}_{27}(T^{*}M) =  \{ \eta \in \Lambda^{3}(T^{*}M): \eta \wedge \varphi = \eta \wedge \psi=0 \} 
\end{align}

The spaces $S_{0}^{2}(M)$ and $\Lambda_{27}^{3}(T^{\ast}M)$ are isomorphic as $G_2$ representation spaces and a particular isomorphism $\i_{\varphi}$ between them may be given by extending by linearity the following map on decomposable symmetric traceless two-tensors $\alpha \circ \beta$ \cite{Bryant2005}. 

\begin{equation}
\i_{\varphi}(\alpha\circ\beta) = 
\alpha\wedge\ast(\beta\wedge\ast\varphi)
+\beta\wedge\ast(\alpha\wedge\ast\varphi)
\end{equation}

In local coordinates the map $\i_{\varphi}$ acting on a symmetric two-tensor $h$ takes the form 

\begin{align} \label{ilocal}
\i_{\varphi}(h) &= h_{il}\varphi_{ljk} \hspace{1mm} dx^{i}\wedge dx^{j}\wedge dx^{k}\\
&= \frac{1}{3}(h_{il}\varphi_{ljk} -h_{jl}\varphi_{lik} - h_{kl}\varphi_{lji}) \hspace{1mm} dx^{i}\wedge dx^{j}\wedge dx^{k} \nonumber
\end{align}

With this normalization,
\begin{equation}\label{eq:i-norm}
  \i_\varphi(g_\varphi)=6\varphi
  \qquad |\i_\varphi(h)|^2=8|h|^2
  \quad\text{for }h\in S^2_0(M)
\end{equation}

The inverse of this isomorphism, denoted $\j_{\varphi}$, as a map from $\Lambda_{27}^{3}$ to $S_{0}^{2}(M)$ takes the form
 
\begin{equation}
\j_{\varphi}(\gamma)(v,w) = \ast\bigl((v\lrcorner\varphi)\wedge(w\lrcorner\varphi)\wedge\gamma\bigr)
\end{equation}

Following Fern\'andez and Gray \cite{FernandezGray1982}, for an arbitrary $\GG_{2}$-Structure we may decompose the exterior derivatives of the $\GG_{2}$ three-form and four-form into irreducible $\GG_{2}$ representations.

\begin{align}\label{fernandezgray}
& d\varphi = \tau_{0}\psi + 3 \tau_{1} \wedge \phi + \ast \tau_{3} \\
& d\psi = 4 \tau_{1} \wedge \psi - \ast \tau_{2}
\end{align}

This decomposition identifies special functions and differential forms $\tau_{0} \in C^{\infty}(M)$, $\tau_{1} \in \Gamma(M)$, $\tau_{2} \in \Lambda^{2}_{14}(T^{*}M)$, and $\tau_{3}\in \Lambda^{3}_{27}(T^{*}M)$ called \textit{torsion forms} of the $G_2$-Structure. There are then sixteen possibilities for types of $\GG_{2}$-Structures depending on which of these torsion forms vanish. For a positive three-form underlying a $\GG_{2}$-holonomy metric all of the torsion forms must vanish. The three-form underlying a metric with holonomy contained in $\GG_{2}$ is then said to be \textit{torsion-free}.\\

For a closed $\GG_{2}$-Structure, the Fern\'andez--Gray decomposition implies the only non-vanishing torsion form must be $\tau_2$, which from now on we will denote as $\tau$ since from now on we will only be considering closed $G_2$-Structures. Any form in $\Lambda^{2}_{14}(T^{*}M)$ including $\tau$ obeys the following properties \cite{Bryant2005}. \footnote{In this paper unless stated otherwise we use $|\cdot|$ to denote the differential form norm and $\|\cdot\|$ to denote the tensor norm, which are related by a factor of $k!$}

\begin{align}\label{14dimrepproperties}
&|\tau\wedge\tau|^2 = |\tau|^4\\ 
&\tau\wedge\ast(\tau\wedge\tau) = |\tau|^2 \ast\tau + \frac{1}{3}\ast(\tau\wedge\tau\wedge\tau)\wedge\psi
\end{align}

In local coordinates since $\tau\in\Lambda^{2}_{14}(T^{*}M)$ it satisfies

\begin{equation}
\tau_{ij}\psi_{ijab} = -2\tau_{ab}
\end{equation}

We may also relate $\tau$ to the torsion tensor of the $G_2$-Structure

\begin{equation}
T_{ab} = \frac{1}{24}\Big(\nabla_{a}\varphi_{ijk}\Big)\psi_{bijk}
\end{equation}

For a closed $G_2$-Structure since $\tau$ is the only non-vanishing torsion form \cite{Karigiannis2010}

\begin{equation}
T_{ab} = -\frac{1}{2}\tau_{ab}
\end{equation}

The torsion tensor is used to characterize the covariant derivative of $\varphi$ in the following manner

\begin{equation}\label{G2TorsionTensor}
\nabla_{m}\varphi_{ijk} = T_{mn}\psi_{nijk}
\end{equation}

For such a structure, the Ricci and scalar curvatures also have relatively simple dependence on the torsion $\tau$ \cite{Bryant2005}. 

\begin{align}\label{closedRicciScalar}
\textnormal{Ric}(g_\varphi) &= 
  \biggl(\frac{1}{4}\,|\tau|^2\biggl)\,g_\varphi - \j_{\varphi}\Bigl( \frac{1}{4}\, d\tau - \frac{1}{8}\,\ast(\tau\wedge\tau) \Bigr) \\
\textnormal{Scal}(g_\varphi) &= -\frac{1}{2}|\tau|^2
\end{align}

Using the curvature convention $\textnormal{Ric}_{ij}=R_{kijk}$, following Cleyton--Ivanov \cite{CleytonIvanov2007,CleytonIvanov2008} we define the $\ast$-Ricci tensor by
\begin{equation}\label{eq:star-ricci-definition}
  \textnormal{Ric}^{\ast}_{ij}
  = R_{abcd}\,\varphi_{abi}\,\varphi_{cdj}
\end{equation}
Its trace satisfies
$\textnormal{tr}_{g_\varphi}\textnormal{Ric}^{\ast}
=-2\,\textnormal{Scal}(g_\varphi)$ and therefore its traceless part is
\begin{equation}
  \textnormal{Ric}^{\ast}_{0}
  = \textnormal{Ric}^{\ast}
    + \frac{2}{7}\textnormal{Scal}(g_\varphi)\,g_\varphi
\end{equation}

In local coordinates, the Ricci curvature may also be given as \cite{LotayWei2017}

\begin{equation} \label{closedG2Ricci}
\Ric_{ik} = (\nabla_{j}T_{li})\varphi_{kjl} - T_{ij}T_{jk}
\end{equation}

In particular, the scalar curvature is non-positive and vanishes identically if and only if the $\GG_{2}$-Structure is torsion-free. In this case the metric is not only scalar-flat but also Ricci-flat. An expression for the exterior derivative of $\tau$ follows by isolating the trace-free part of the Ricci curvature $\textnormal{Ric}_0$ and applying the isomorphism $\i_{\varphi}$ to \ref{closedRicciScalar}.

\begin{equation}\label{dtau}
d\tau = \frac{3}{14}|\tau|^2\varphi + \frac{1}{2}\ast(\tau\wedge\tau) - \frac{1}{2} \i_{\varphi} (\textnormal{Ric}_{0})
\end{equation}

Similarly, the $\ast$-Ricci curvature formula of Cleyton and Ivanov may be rearranged to 
\cite{CleytonIvanov2008} give
\begin{equation}\label{eq:star-ricci-torsion}
  d\tau
  = \frac{1}{8}\i_\varphi(\textnormal{Ric}^{\ast}_{0})
    + \frac{3}{28}|\tau|^2\varphi
    - \frac{1}{4}\ast(\tau\wedge\tau)
\end{equation}

As far as the Riemannian curvature of closed $\GG_2$-Structures is concerned, Bryant showed that all closed $\GG_2$-Structures that are not torsion-free satisfy the following inequality \cite{Bryant2005}. 

\begin{equation}\label{pinchinginequality}
\int_{M} |\i_{\varphi} (\textnormal{Ric}_{0})|^2 \textnormal{vol} \geq \frac{8}{21} \int_{M} |\tau|^4 \textnormal{vol}
\end{equation}

Closed $\GG_2$-Structures which saturate this inequality are called \textit{Extremally Ricci-Pinched (ERP)}. This ERP condition may also be characterized in terms of a pointwise condition on the torsion two-form $\tau$.

\begin{equation}\label{erppointwise}
d\tau = \frac{1}{6}|\tau|^2\varphi + \frac{1}{6}\ast(\tau\wedge\tau)
\end{equation}

This pointwise condition may also be cast as an expression for the trace-free Ricci tensor.

\begin{equation}\label{eq:erp-tracefree-ricci}
\i_\varphi(\textnormal{Ric}_0) = \frac{2}{3} (\ast(\tau\wedge\tau) + \frac{1}{7}|\tau|^2\varphi)
\end{equation}

Following the notation of Cleyton and Ivanov \cite{CleytonIvanov2008}, let $\bar\nabla$ denote the canonical $\GG_2$ connection obtained by projecting the Levi-Civita connection onto $\mathfrak g_2$. We introduce the notation
\begin{equation}\label{eq:canonical-torsion-derivative}
  d^{\bar\nabla}\tau
  =d\tau-\frac{1}{6}|\tau|^2\varphi
    -\frac{1}{6}\ast(\tau\wedge\tau)
  \in\Lambda^3_{27}(T^*M)
\end{equation}
Note that $\varphi$ is ERP if and only if $d^{\bar\nabla}\tau=0$. We also have
\begin{equation}\label{eq:torsion-square-projection}
  \pi_{27}\bigl(\ast(\tau\wedge\tau)\bigr)
  =\ast(\tau\wedge\tau)+\frac{1}{7}|\tau|^2\varphi
  \qquad
  \left|\pi_{27}\bigl(\ast(\tau\wedge\tau)\bigr)\right|^2
  =\frac{6}{7}|\tau|^4
\end{equation}

The Riemannian curvature meanwhile has the orthogonal $\GG_2$ decomposition \cite{CleytonIvanov2008}
\begin{equation}\label{eq:g2-curvature-decomposition}
  \textnormal{Rm}=S+R_0+W_{27}+W_{64}+W_{77},
\end{equation}
where $S$ and $R_0$ are determined by the scalar and trace-free Ricci curvatures, respectively, and the subscripts on the Weyl components indicate their dimensions. The component $W_{27}$ is determined by the symmetric trace-free tensor
\begin{equation}\label{eq:weyl-ricci-definition}
  \textnormal{Ric}^{W}
  =\frac{1}{20}\left(4\textnormal{Ric}_0
    -5\textnormal{Ric}^{\ast}_0\right)
  \qquad
  \|W_{27}\|^2=\frac{15}{28}|\textnormal{Ric}^{W}|^2
\end{equation}
For ERP $\GG_2$-Structures, the Weyl component $W_{27}$ satisfies $\|W_{27}\|^2=\frac{9}{320}|\tau|^4$ and hence is nonzero wherever $\tau\neq0$.\\

In the compact setting Bryant showed these structures have some additional rather remarkable properties.

\medskip
\begin{prop} [Properties of ERP $\GG_2$-Structures \cite{Bryant2005}] \label{thm: BryantERPResults} Let $\varphi$ be an ERP $\GG_2$-Structure with corresponding torsion two-form $\tau$ on a compact 7-manifold. Then

\begin{align}\label{erpproperties}
& d|\tau|^2 = 0 \\
& \tau\wedge\tau\wedge\tau=0 \\
& \Delta_{\varphi} (\tau\wedge\tau) = 0 
\end{align}

\end{prop}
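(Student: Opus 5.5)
The plan is to take the exterior derivative of the pointwise ERP equation \eqref{erppointwise} and integrate over the compact manifold. Since $d^2\tau=0$ and $d\varphi=0$ (closedness), applying $d$ to \eqref{erppointwise} gives
\begin{equation*}
0 = \frac16\, d|\tau|^2\wedge\varphi + \frac16\, d\ast(\tau\wedge\tau).
\end{equation*}
Next I would compute $d\ast(\tau\wedge\tau)$ independently and compare the two expressions. Using $d\tau$ from \eqref{erppointwise}, $d(\tau\wedge\tau)=2\,d\tau\wedge\tau$ reduces algebraically, via $\tau\wedge\varphi=-\ast\tau$ for $\tau\in\Lambda^2_{14}$ and the identity \eqref{14dimrepproperties}, to a multiple of $|\tau|^2\ast\tau$ plus a multiple of $\ast(\tau\wedge\tau\wedge\tau)\wedge\psi$. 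Similarly, $d^\ast\tau$ is controlled by closedness: $d\psi=-\ast\tau$ gives $d\ast\tau=0$, so $\tau$ is coclosed.

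The key identity I aim for is that $d\ast(\tau\wedge\tau)$, decomposed into types $\Lambda^4_7\oplus\Lambda^4_{27}\oplus\Lambda^4_1$, has its $\Lambda^4_7$-part proportional to a combination of $d|\tau|^2\wedge\varphi$ and the vector field dual to $\ast(\tau^3)$. The $\Lambda^4_7$-component of the displayed equation then gives a linear relation
\begin{equation*}
a\, d|\tau|^2 + b\,\bigl(\ast(\tau\wedge\tau\wedge\tau)\bigr)\text{-term} = 0 .
\end{equation*}
On the other hand, $\tau^3$ is a $6$-form, with $d(\tau^3)=3\,d\tau\wedge\tau^2$. Substituting \eqref{erppointwise} and using $\varphi\wedge\tau\wedge\tau=-|\tau|^2\mathrm{vol}$ together with $\ast(\tau\wedge\tau)\wedge\tau\wedge\tau=|\tau\wedge\tau|^2\mathrm{vol}=|\tau|^4\mathrm{vol}$ gives
\begin{equation*}
d(\tau^3)=\tfrac12\bigl(-|\tau|^4+|\tau|^4\bigr)\mathrm{vol}=0 .
\end{equation*}
This is consistent, but it yields no information by itself. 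The decisive step is an integral identity. Taking the $L^2$ inner product of the relation with a suitable $1$-form, or computing $\Delta|\tau|^2$ by a Weitzenböck/Bochner formula for $\tau$ with $d\tau$ given by ERP and $d^\ast\tau=0$, should give
\begin{equation*}
\Delta|\tau|^2 = c\,\bigl|d|\tau|^2\bigr|^2 + (\text{nonnegative terms involving } \tau^3).
\end{equation*}
Integrating over $M$ then forces both $d|\tau|^2=0$ and $\tau^3=0$.

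Once $|\tau|^2$ is constant and $\tau^3=0$, the third claim follows by direct computation. First, $d(\tau\wedge\tau)=2\,d\tau\wedge\tau=\frac13|\tau|^2\varphi\wedge\tau+\frac13\ast(\tau^2)\wedge\tau$. Using \eqref{14dimrepproperties} with $\tau^3=0$, this becomes $\frac13|\tau|^2(-\ast\tau+\ast\tau)=0$. Then the displayed equation gives $d\ast(\tau\wedge\tau)=0$ since $d|\tau|^2=0$, so $\tau\wedge\tau$ is both closed and coclosed, hence harmonic.

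The main obstacle I expect is the representation-theoretic computation of the $\Lambda^4_7$-part of $d\ast(\tau^2)$, equivalently of $d^\ast(\tau\wedge\tau)$, in terms of $\nabla\tau$, and producing the correct signs so that the integral identity is coercive. I would first try expressing everything in index notation using \eqref{G2TorsionTensor} and $\tau_{ij}\psi_{ijab}=-2\tau_{ab}$.
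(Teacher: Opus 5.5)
\textbf{There is a genuine gap.} The step that actually proves the statement is missing. The ingredient that supplies it is one you computed yourself and then set aside.

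\textbf{First, the relation you call the key identity is only asserted.} You need to compute it and check that \emph{both} coefficients are nonzero. If the coefficient of $\ast(\tau\wedge\tau\wedge\tau)$ vanished you would learn nothing about $\tau^3$; if the coefficient of $d|\tau|^2$ vanished you would learn nothing about $|\tau|^2$. One concrete route:
\begin{itemize}
\item Your own computation shows that the $|\tau|^2\ast\tau$ terms in $2\,d\tau\wedge\tau$ cancel. So under ERP, $d(\tau\wedge\tau)$ is a pure multiple of $\ast(\tau\wedge\tau\wedge\tau)\wedge\psi$. Meanwhile $d\ast(\tau\wedge\tau)=-d|\tau|^2\wedge\varphi$.
\item Write $\ast(\tau\wedge\tau)=-\frac17|\tau|^2\varphi+\i_\varphi(h)$ with $h$ traceless. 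The $7$-components of $d$ and of $d^\ast$ of this $3$-form are each a fixed linear combination of $d|\tau|^2$, $\mathrm{div}\,h$, and a torsion term contracting $h$, $\tau$ and $\varphi$.
\item Since $h$ is quadratic in $\tau$, that torsion term is a multiple of $\ast(\tau\wedge\tau\wedge\tau)$.
\item Eliminating $\mathrm{div}\,h$ between the two equations gives $d|\tau|^2=c\,\ast(\tau\wedge\tau\wedge\tau)$ for a universal constant $c\neq0$.
\end{itemize}

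\textbf{Second, the ``decisive'' Bochner-type identity is only guessed, and none is needed.} Once the relation is established:
\begin{itemize}
\item Applying $\ast$ gives $\tau\wedge\tau\wedge\tau=c^{-1}\ast d|\tau|^2$.
\item Your computation $d(\tau\wedge\tau\wedge\tau)=0$, which you dismissed as giving no information, then says exactly $d\ast d|\tau|^2=0$. That is, $|\tau|^2$ is harmonic.
\item On compact $M$, $|\tau|^2$ is therefore constant, and the relation then forces $\tau\wedge\tau\wedge\tau=0$.
\end{itemize}
This argument also shows $\Delta|\tau|^2=0$ pointwise for every ERP structure. So an identity of the form you propose, with a positive $|d|\tau|^2|^2$ term, would give $d|\tau|^2=0$ even without compactness. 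That is more than the statement claims, and you give no support for it.

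The paper itself gives no proof here and simply cites Bryant. Your derivation of the third property from the first two is correct.
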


The first property means that the scalar curvature of such a structure is a negative constant. Since $\tau\wedge\tau$ cannot vanish for representation-theoretic reasons \cite{Bryant2005}, but $\tau\wedge\tau\wedge\tau$ necessarily does, that implies that the four-form $\tau\wedge\tau$ and its Hodge dual $\ast(\tau\wedge\tau)$ are simple. This implies an orthogonal splitting of the tangent bundle into integrable subbundles $TM = P \oplus Q$, where $P$ is calibrated by $-|\tau|^2 \ast(\tau\wedge\tau)$ and $Q$ by $-|\tau|^2(\tau\wedge\tau)$. Given the expression for the Ricci tensor, this splitting further implies that the Ricci curvature is non-positive with eigenvalues $-\frac{1}{6}|\tau|^2$ of multiplicity three and 0 of multiplicity four. \\

In terms of this orthogonal splitting, we may express the full Ricci tensor as

\begin{equation}\label{ERPricci}
\Ric(g_{\varphi}) = -\frac{1}{6}|\tau|^2 g_{\varphi}|_{P}
\end{equation}

The original example of an ERP $G_2$-Structure was constructed by Bryant \cite{Bryant2005} and an example admitting a compact quotient amongst other examples were constructed by Kath and Lauret \cite{KathLauret2020}. Additional examples in the non-compact setting are known to be soliton solutions of the Laplacian flow \cite{Ball2019}, however, in the compact setting the three-form underlying an ERP $G_2$-Structure cannot be exact and therefore it cannot be an expanding soliton. \cite{Kennon2024}.  \\

\section{Characterizations of ERP Condition}

We first characterize the ERP condition by adapting Bryant's pinching argument to the traceless $\ast$-Ricci tensor.

\begin{prop} [$\ast$-Ricci Characterization of ERP] \label{thm:star-ricci-erp} Let $\varphi$ be a closed $\GG_2$-Structure on a compact $7$-manifold $M$ without boundary. Suppose that, for some constant $0\leq C\leq1$,
\begin{equation}\label{eq:star-ricci-pinching}
  \|\textnormal{Ric}^{\ast}_{0}\|^2
  \leq \frac{100}{21}C\,\textnormal{Scal}(g_\varphi)^2
\end{equation}
holds pointwise. If $C<1$, then $\varphi$ is torsion-free. If $C=1$, then $\varphi$ satisfies the ERP condition \ref{erppointwise}. Moreover, every ERP $\GG_2$-Structure satisfies
\begin{equation}\label{eq:erp-star-ricci}
  \textnormal{Ric}^{\ast}_{0}=5\,\textnormal{Ric}_0
  \qquad
  \|\textnormal{Ric}^{\ast}_{0}\|^2
  =\frac{100}{21}\textnormal{Scal}(g_\varphi)^2
\end{equation}
\end{prop}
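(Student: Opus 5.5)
The plan is to mimic Bryant's integral pinching argument, using the Cleyton--Ivanov formula \eqref{eq:star-ricci-torsion} in place of \eqref{dtau}. Write $A=\pi_{27}(\ast(\tau\wedge\tau))=\ast(\tau\wedge\tau)+\frac17|\tau|^2\varphi$, so that by \eqref{eq:torsion-square-projection} we have $|A|^2=\frac67|\tau|^4$. Comparing \eqref{dtau} with \eqref{eq:star-ricci-torsion} and projecting onto $\Lambda^3_{27}$ gives
\[
\tfrac18\i_\varphi(\textnormal{Ric}^\ast_0)=\pi_{27}(d\tau)+\tfrac14A,
\qquad
\tfrac12\i_\varphi(\textnormal{Ric}_0)=-\pi_{27}(d\tau)+\tfrac12A .
\]
Eliminating $d\tau$ yields the pointwise identity $\i_\varphi(\textnormal{Ric}^\ast_0)=-4\i_\varphi(\textnormal{Ric}_0)+6A$. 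The ERP identity \eqref{eq:erp-tracefree-ricci} says $\i_\varphi(\textnormal{Ric}_0)=\frac23A$, so in the ERP case $\i_\varphi(\textnormal{Ric}^\ast_0)=\frac{10}{3}A=5\i_\varphi(\textnormal{Ric}_0)$. Then \eqref{eq:i-norm} gives $\|\textnormal{Ric}^\ast_0\|^2=\frac18\cdot\frac{100}{9}\cdot\frac67|\tau|^4$. With $\textnormal{Scal}^2=\frac14|\tau|^4$ this is $\frac{100}{21}\textnormal{Scal}^2$, once the paper's convention relating $\|\cdot\|$ on $S^2_0$ to $|\cdot|$ on $\Lambda^3$ is used consistently. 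This settles the "moreover" part; I will check the normalization constants carefully here.

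For the pinching statement, I will integrate. Write $\pi_{27}(d\tau)=d^{\bar\nabla}\tau+\frac16 A$, which follows from \eqref{eq:canonical-torsion-derivative}. Then
\[
\tfrac18\i_\varphi(\textnormal{Ric}^\ast_0)=d^{\bar\nabla}\tau+\tfrac{5}{12}A .
\]
Hence
\[
\tfrac1{64}|\i_\varphi(\textnormal{Ric}^\ast_0)|^2=|d^{\bar\nabla}\tau|^2+\tfrac56\langle d^{\bar\nabla}\tau,A\rangle+\tfrac{25}{144}\cdot\tfrac67|\tau|^4 .
\]
The key input is Bryant's integral identities on compact $M$. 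Integrating $\int\langle d\tau,\ast(\tau\wedge\tau)\rangle$ by parts, using $\tau\wedge\tau\wedge\tau$ exactness and $d^\ast\tau=0$ with $\tau\in\Lambda^2_{14}$, gives $\int|d\tau|^2=\int|\Delta\varphi|^2$ expressed through $\int|\tau|^4$. In particular $\int\langle d\tau,\varphi\rangle=\int|\tau|^2$, and $\int\langle d\tau,\ast(\tau\wedge\tau)\rangle=-\frac17\int|\tau|^4$ via $d(\tau^3)$. From these, $\int\langle d^{\bar\nabla}\tau,A\rangle$ is a fixed multiple of $\int|\tau|^4$. This is exactly the mechanism that produces Bryant's inequality \eqref{pinchinginequality}, which can equivalently be written $\int|\i_\varphi(\textnormal{Ric}_0)|^2=4\int|d^{\bar\nabla}\tau|^2+\frac{8}{21}\int|\tau|^4$ after absorbing the cross term.

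I expect the same computation for $\textnormal{Ric}^\ast_0$ to give
\[
\int\|\textnormal{Ric}^\ast_0\|^2=c\int|d^{\bar\nabla}\tau|^2+\frac{100}{21}\int\textnormal{Scal}^2
\]
with $c>0$. The main obstacle is verifying that the cross term integrates to exactly the value making the constant $\frac{100}{21}$ and leaving $c>0$. I would get this by deriving the $\ast$-Ricci identity directly from the Ricci one via $\textnormal{Ric}^\ast_0=-4\textnormal{Ric}_0+6\j(A)$ (suitably normalized), rather than redoing the integration.

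Given the hypothesis \eqref{eq:star-ricci-pinching}, integration yields
\[
c\int|d^{\bar\nabla}\tau|^2\le(C-1)\tfrac{100}{21}\int\textnormal{Scal}^2\le 0 .
\]
If $C<1$, this forces $\int|\tau|^4=0$, so $\tau=0$ and $\varphi$ is torsion-free. If $C=1$, it forces $d^{\bar\nabla}\tau=0$, which is the ERP condition \eqref{erppointwise}.
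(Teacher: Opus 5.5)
Your pointwise algebra is correct. The identities $\i_\varphi(\textnormal{Ric}^\ast_0)=-4\,\i_\varphi(\textnormal{Ric}_0)+6A$, the ERP consequences $\i_\varphi(\textnormal{Ric}^\ast_0)=\frac{10}{3}A=5\,\i_\varphi(\textnormal{Ric}_0)$ and $\|\textnormal{Ric}^\ast_0\|^2=\frac{25}{21}|\tau|^4=\frac{100}{21}\textnormal{Scal}^2$, and $\frac18\i_\varphi(\textnormal{Ric}^\ast_0)=d^{\bar\nabla}\tau+\frac5{12}A$ all check out. So the "moreover" part is fine.

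\textbf{The gap.} The problem is the one non-algebraic input, the cross term. You leave it as an expectation, and the one value you do state is wrong. You claim $\int_M\langle d\tau,\ast(\tau\wedge\tau)\rangle=-\frac17\int_M|\tau|^4$. In fact $\langle d\tau,\ast(\tau\wedge\tau)\rangle\,\textnormal{vol}=d\tau\wedge\tau\wedge\tau=\frac13d(\tau\wedge\tau\wedge\tau)$, so this integral is zero. The quantity $-\frac17|\tau|^4$ is only the pairing of $\ast(\tau\wedge\tau)$ with the $\Lambda^3_1$-part $\frac17|\tau|^2\varphi$ of $d\tau$.

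Fed through your expansion, your value gives $\int\langle d^{\bar\nabla}\tau,A\rangle=-\frac17\int|\tau|^4$, and hence $\int\|\textnormal{Ric}^\ast_0\|^2=8\int|d^{\bar\nabla}\tau|^2+\frac{20}{21}\int\textnormal{Scal}^2$. That is the wrong constant, and it contradicts your own ERP computation. It is also inconsistent with the form of Bryant's identity you quote, $\int|\i_\varphi(\textnormal{Ric}_0)|^2=4\int|d^{\bar\nabla}\tau|^2+\frac8{21}\int|\tau|^4$, which is equivalent to $\int\langle d^{\bar\nabla}\tau,A\rangle=0$.

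Your proposed detour through $\textnormal{Ric}^\ast_0=-4\textnormal{Ric}_0+6\j_\varphi(A)$ does not avoid the issue. It needs $\int\langle\i_\varphi(\textnormal{Ric}_0),A\rangle=\frac47\int|\tau|^4$, which is the same Stokes computation. Separately, $\int|d\tau|^2$ is not determined by $\int|\tau|^4$ for a general closed structure; one has $\int|d\tau|^2=\frac16\int|\tau|^4+\int|d^{\bar\nabla}\tau|^2$.

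\textbf{The repair.} Use $\langle d\tau,\varphi\rangle=|\tau|^2$, $\langle\varphi,A\rangle=0$ and $\langle\ast(\tau\wedge\tau),A\rangle=\frac67|\tau|^4$. Then pointwise $\langle d^{\bar\nabla}\tau,A\rangle=\frac13\ast d(\tau\wedge\tau\wedge\tau)$, and therefore
\[
\|\textnormal{Ric}^\ast_0\|^2=8|d^{\bar\nabla}\tau|^2+\tfrac{100}{21}\textnormal{Scal}(g_\varphi)^2+\tfrac{20}{9}\ast d(\tau\wedge\tau\wedge\tau).
\]
After integration $c=8$, and your concluding step goes through.

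\textbf{Comparison with the paper.} Once repaired, this is a genuinely different route. The paper wedges \eqref{eq:star-ricci-torsion} with $\tau\wedge\tau$ to get the divergence identity $\frac13d(\tau\wedge\tau\wedge\tau)=\bigl[\frac18\langle\i_\varphi(\textnormal{Ric}^\ast_0),A\rangle-\frac5{14}|\tau|^4\bigr]\textnormal{vol}$. It then bounds the pairing pointwise by Cauchy--Schwarz and the hypothesis, and reads off ERP from the equality case $\i_\varphi(\textnormal{Ric}^\ast_0)=\frac{10}{3}A$. Both arguments rest on the same Stokes fact, $\int d\tau\wedge\tau\wedge\tau=0$.

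The paper's version is shorter and never needs $d^{\bar\nabla}\tau$. Your sum-of-squares version has three advantages:
\begin{itemize}
\item It produces the exact defect $8\int|d^{\bar\nabla}\tau|^2$, the $\ast$-Ricci analogue of Bryant's identity and parallel to the paper's Ricci-eigenvalue and $W_{27}$ formulas.
\item It makes plain that only the integrated hypothesis is needed.
\item It yields ERP directly as $d^{\bar\nabla}\tau=0$, with no equality-case analysis of Cauchy--Schwarz.
\end{itemize}
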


\textit{Proof.} Using equation \ref{eq:torsion-square-projection} after wedging \ref{eq:star-ricci-torsion} with $\tau\wedge\tau$ yields
\begin{equation}\label{eq:star-ricci-divergence}
  d\left(\frac{1}{3}\tau\wedge\tau\wedge\tau\right)
  =\left[\frac{1}{8}\left\langle
      \i_\varphi(\textnormal{Ric}^{\ast}_{0}),\pi_{27}(\ast(\tau\wedge\tau))
    \right\rangle-\frac{5}{14}|\tau|^4\right]\textnormal{vol}
\end{equation}
By Cauchy--Schwarz, 
\begin{align*}
  \ast d\left(\frac{1}{3}\tau\wedge\tau\wedge\tau\right)
  &\leq \sqrt{\frac{3}{28}}\,
    |\textnormal{Ric}^{\ast}_{0}|\,|\tau|^2
    -\frac{5}{14}|\tau|^4\\
  &\leq\frac{5}{14}(\sqrt C-1)|\tau|^4
\end{align*}
If $C<1$, integration and Stokes' theorem imply $\tau=0$. If $C=1$, the left-hand side is non-positive and has zero integral, so equality holds pointwise in both inequalities. At each point where $\tau\neq0$, equality in Cauchy--Schwarz and the divergence identity above imply
\begin{equation}\label{eq:erp-star-ricci-form}
  \i_\varphi(\textnormal{Ric}^{\ast}_{0})=\frac{10}{3}\pi_{27}(\ast(\tau\wedge\tau))
\end{equation}
At points where $\tau=0$, the same identity holds trivially via the pointwise identity. Conversely, substituting \ref{erppointwise} into \ref{eq:star-ricci-torsion} gives the preceding expression for $\i_\varphi(\textnormal{Ric}^{\ast}_{0})$. Comparing this with \ref{eq:erp-tracefree-ricci} yields $\textnormal{Ric}^{\ast}_{0}=5\,\textnormal{Ric}_0$. Finally, the norm identity \ref{eq:i-norm} and the expression for the scalar curvature give
\[
  |\textnormal{Ric}^{\ast}_{0}|^2
  =\frac{100}{72}|\pi_{27}(\ast(\tau\wedge\tau))|^2
  =\frac{25}{21}|\tau|^4
  =\frac{100}{21}\textnormal{Scal}(g_\varphi)^2
\]
\qed

We next turn to a characterization in terms of the torsion. We begin with an integral identity relating $d\tau$ to the trace-free Ricci tensor.

\begin{lem} [Global Curvature Equality] \label{thm: L2CurvatureLemma} For a closed $\GG_2$-Structures $\varphi$ on a compact manifold $M$, its associated torsion $\tau$ and traceless Ricci tensor $\iRic$ obey the following global relationship.

\begin{equation}
\int_{M}|d\tau|^2\textnormal{vol} = \frac{1}{14} \int_{M} |\tau|^4 \textnormal{vol} + \frac{1}{4} \int_{M} |\iRic|^2 \textnormal{vol}
\end{equation}

\end{lem}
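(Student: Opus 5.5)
The plan is to start from the pointwise identity \ref{dtau}, which expresses $d\tau$ as a sum of three terms lying in the three irreducible pieces $\Lambda^3_1$ and $\Lambda^3_{27}$. First I will rewrite it in orthogonal form. Using \ref{eq:torsion-square-projection}, write $\ast(\tau\wedge\tau)=\pi_{27}(\ast(\tau\wedge\tau))-\frac{1}{7}|\tau|^2\varphi$. Then
\begin{equation*}
d\tau=\Bigl(\frac{3}{14}-\frac{1}{14}\Bigr)|\tau|^2\varphi+\frac12\pi_{27}\bigl(\ast(\tau\wedge\tau)\bigr)-\frac12\iRic
=\frac17|\tau|^2\varphi+\frac12\Bigl(\pi_{27}\bigl(\ast(\tau\wedge\tau)\bigr)-\iRic\Bigr).
\end{equation*}
The $\Lambda^3_1$ and $\Lambda^3_{27}$ components are pointwise orthogonal. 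Since $|\varphi|^2=7$, this gives
\begin{equation*}
|d\tau|^2=\frac{1}{7}|\tau|^4+\frac14|\pi_{27}(\ast(\tau\wedge\tau))|^2-\frac12\bigl\langle\iRic,\pi_{27}(\ast(\tau\wedge\tau))\bigr\rangle+\frac14|\iRic|^2 .
\end{equation*}
By \ref{eq:torsion-square-projection}, the second term equals $\frac{3}{14}|\tau|^4$.

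The only term that is not already in final form is the cross term, and I will handle it exactly as in the proof of Proposition \ref{thm:star-ricci-erp}, via a Stokes argument. Wedge \ref{dtau} with $\tau\wedge\tau$, using $d(\tau\wedge\tau\wedge\tau)=3\,d\tau\wedge\tau\wedge\tau$. Note that $\varphi\wedge\tau\wedge\tau=-|\tau|^2\mathrm{vol}$, because $\tau\in\Lambda^2_{14}$ satisfies $\varphi\wedge\tau=-\ast\tau$. Also $\ast(\tau\wedge\tau)\wedge\tau\wedge\tau=|\tau|^4\mathrm{vol}$. Finally, $\i_\varphi(\mathrm{Ric}_0)\wedge\tau\wedge\tau=\langle\iRic,\ast(\tau\wedge\tau)\rangle\mathrm{vol}=\langle\iRic,\pi_{27}(\ast(\tau\wedge\tau))\rangle\mathrm{vol}$. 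Together these give
\begin{equation*}
\ast d\Bigl(\frac13\tau\wedge\tau\wedge\tau\Bigr)=-\frac{3}{14}|\tau|^4+\frac12|\tau|^4-\frac12\bigl\langle\iRic,\pi_{27}(\ast(\tau\wedge\tau))\bigr\rangle .
\end{equation*}
Integrating over the closed manifold $M$ kills the left side, so
\begin{equation*}
\frac12\int_M\langle\iRic,\pi_{27}(\ast(\tau\wedge\tau))\rangle\,\mathrm{vol}=\frac{2}{7}\int_M|\tau|^4\,\mathrm{vol}.
\end{equation*}

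Substituting this into the integrated pointwise expansion gives
\begin{equation*}
\int_M|d\tau|^2\,\mathrm{vol}=\Bigl(\frac17+\frac{3}{14}-\frac27\Bigr)\int_M|\tau|^4\,\mathrm{vol}+\frac14\int_M|\iRic|^2\,\mathrm{vol}.
\end{equation*}
The coefficient is $\frac{2+3-4}{14}=\frac{1}{14}$, which is the claimed identity.

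As a consistency check, the same computation with $|d\tau|^2\ge0$ replaced by the orthogonal decomposition reproduces Bryant's inequality \ref{pinchinginequality}. In the ERP case, \ref{eq:erp-tracefree-ricci} makes the cross term pointwise equal to $\frac{2}{7}|\tau|^4$.

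The main obstacle is bookkeeping of signs and normalizations. The quantities to pin down are $\varphi\wedge\tau\wedge\tau=-|\tau|^2\mathrm{vol}$ (form-norm convention), $|\varphi|^2=7$, and the orientation/sign in $\ast$. I will calibrate them against the identities the paper already uses: \ref{eq:star-ricci-divergence} arises from the analogous wedge of \ref{eq:star-ricci-torsion}, and its coefficient $-\frac{5}{14}$ is consistent with my values. I will also check that the coefficients $\frac{3}{14},\frac12,-\frac12$ of \ref{dtau} collapse the $\Lambda^3_1$ part to $\frac17|\tau|^2\varphi$.
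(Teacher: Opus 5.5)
Your proof is correct and follows essentially the paper's argument. You wedge \ref{dtau} with $\tau\wedge\tau$ and use Stokes to get $\int_M\langle\iRic,\ast(\tau\wedge\tau)\rangle\,\textnormal{vol}=\frac{4}{7}\int_M|\tau|^4\,\textnormal{vol}$, then expand $\int_M|d\tau|^2\,\textnormal{vol}$ and substitute. Your preliminary split of $d\tau$ into $\Lambda^3_1$ and $\Lambda^3_{27}$ components is only a bookkeeping choice: it reproduces the paper's coefficient $\frac{5}{14}=\frac{1}{7}+\frac{3}{14}$ on the $|\tau|^4$ term before the cross term is inserted.
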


\textit{Proof.} We start with the general expression for $d\tau$ for a closed $\GG_2$-Structure.

\begin{equation}
d\tau = \frac{3}{14}|\tau|^2\varphi + \frac{1}{2}\ast(\tau\wedge\tau) - \frac{1}{2} \i (\textnormal{Ric}_{0})
\end{equation}

We first want to solve for the $L^2$ inner product of $\ast(\tau\wedge\tau$) and $\iRic$. We do so by wedging the expression for $d\tau$ against $\tau\wedge\tau$, integrating, and citing Stokes' theorem.

\begin{equation}\label{crossterm}
\langle\ast(\tau\wedge\tau), \iRic\rangle_{L^2} = \frac{4}{7} \int_{M} |\tau|^4 \textnormal{vol}
\end{equation}

We then compute the $L^2$ norm of $d\tau$ using the general expression for it in terms of $\tau$ and $\iRic$. The expression involves cross terms of the form $\langle\iRic, \ast(\tau\wedge\tau)\rangle_{L^2}$.

\begin{equation}
\int_{M}|d\tau|^2\textnormal{vol} = \frac{5}{14}\int_{M}|\tau|^4 \textnormal{vol} -\frac{1}{2} \int_{M} \iRic\wedge\tau\wedge\tau +\frac{1}{4}\int_{M} |\iRic|^2\textnormal{vol}
\end{equation}

We can then plug in the cross terms from \ref{crossterm} in terms of the scalar curvature and simplify to get the desired relation. \qed \\

Utilizing this result, it then follows that the ERP condition may be characterized within the broader class of closed $G_2$-Structures succinctly in terms of a condition on the Hodge Laplacian of its torsion.  

 \begin{prop} [ERP Characterization] \label{prop: ERPChar} Let $\varphi$ be a closed $G_2$-Structure on a compact 7-manifold $M$ with torsion two-form $\tau$. If $\tau$ pointwise satisfies $\Delta\tau = \frac{1}{6}|\tau|^2\tau$ then the closed $G_2$-Structure is ERP. 

 \end{prop}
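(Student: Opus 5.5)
Since $d\tau$ is the only non-trivial derivative and $\tau$ is coclosed, the plan is to pair the hypothesis with $\tau$, integrate, and feed the result into Lemma \ref{thm: L2CurvatureLemma} and Bryant's inequality \ref{pinchinginequality}. Coclosedness holds because $d\psi=-\ast\tau$ for a closed structure, so $d^\ast\tau=\pm\ast d\ast\tau=\pm\ast dd\psi=0$. Hence $\Delta\tau=d^\ast d\tau$, and integrating by parts on the compact manifold gives
\begin{equation*}
\int_M |d\tau|^2\textnormal{vol}=\int_M\langle\Delta\tau,\tau\rangle\textnormal{vol}=\frac{1}{6}\int_M|\tau|^4\textnormal{vol}.
\end{equation*}

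Next we substitute this into Lemma \ref{thm: L2CurvatureLemma}:
\begin{equation*}
\frac{1}{4}\int_M|\iRic|^2\textnormal{vol}=\Big(\frac{1}{6}-\frac{1}{14}\Big)\int_M|\tau|^4\textnormal{vol}=\frac{2}{21}\int_M|\tau|^4\textnormal{vol},
\end{equation*}
so $\int_M|\iRic|^2\textnormal{vol}=\frac{8}{21}\int_M|\tau|^4\textnormal{vol}$. This is exactly equality in Bryant's inequality \ref{pinchinginequality}, which by definition means the structure is ERP. If instead $\tau\equiv0$, the structure is torsion-free, and \ref{erppointwise} holds trivially.

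The main point needing care is that "ERP" in the paper is defined as saturation of \ref{pinchinginequality}, with the pointwise form \ref{erppointwise} quoted as equivalent. To make the argument self-contained, I would also verify that equality forces \ref{erppointwise} directly. Write $d\tau=\frac{1}{7}\ast(d\tau\wedge\psi)\varphi+\pi_{27}(d\tau)$. Using $d\tau\wedge\psi=d(\tau\wedge\psi)=0$, the $\Lambda^3_1$ component vanishes after integrating against $\psi$. More precisely, pointwise $\langle d\tau,\varphi\rangle=\ast(d\tau\wedge\psi)$, and since $\tau\wedge\psi=0$ for $\tau\in\Lambda^2_{14}$, one gets $\langle d\tau,\varphi\rangle=|\tau|^2$ up to the paper's normalization; this is the standard identity giving the $\frac{3}{14}|\tau|^2$ scalar part.

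The remaining step compares the quantity $\int|d^{\bar\nabla}\tau|^2$ from \ref{eq:canonical-torsion-derivative}. Expanding it with \ref{eq:torsion-square-projection} and the cross term obtained by wedging with $\tau\wedge\tau$ (as in the proof of Proposition \ref{thm:star-ricci-erp}), it should equal $\int|d\tau|^2-\frac{1}{6}\int|\tau|^4$, up to a boundary term $d(\tau\wedge\tau\wedge\tau)$ that integrates to zero. The first paragraph shows this vanishes, so $d^{\bar\nabla}\tau=0$ pointwise, which is \ref{erppointwise}.

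Checking the constants in this expansion is the step I expect to be the main obstacle, but it is a finite computation using only \ref{eq:torsion-square-projection} and \ref{14dimrepproperties}.
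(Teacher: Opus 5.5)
Your argument is correct and is the paper's proof: you use the coclosedness of $\tau$ (which you justify, while the paper leaves it implicit) to get $\int_M|d\tau|^2\textnormal{vol}=\frac16\int_M|\tau|^4\textnormal{vol}$, and then substitute this into Lemma \ref{thm: L2CurvatureLemma} to obtain equality in \ref{pinchinginequality}. Your supplementary pointwise step is not needed given the paper's definition of ERP, but its target identity $\int_M|d^{\bar\nabla}\tau|^2\textnormal{vol}=\int_M|d\tau|^2\textnormal{vol}-\frac16\int_M|\tau|^4\textnormal{vol}$ is correct and yields \ref{erppointwise} directly, without the Lemma. Note one slip there: $d\tau\wedge\psi=-\tau\wedge d\psi=|\tau|^2\textnormal{vol}$, not $0$, and this is what gives $\langle d\tau,\varphi\rangle=|\tau|^2$.
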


 \textit{Proof}. Assuming the given constraint on $\Delta\tau$ we may simply integrate by parts against $\tau$ to get

 \begin{equation}\label{intparts}
|d\tau|^2_{L^2} = \langle \tau, \Delta\tau \rangle_{L^2} = \frac{1}{6} \int_{M} |\tau|^4 \hspace{1mm}\textnormal{vol}
\end{equation}

We may then plug this in for $d\tau$ in Lemma \ref{thm: L2CurvatureLemma} and rearrange to get the ERP condition in the global form \ref{pinchinginequality}. \qed

\begin{rmk}
Note that $\langle \tau, \Delta\tau\rangle_{L^2}$ cannot be smaller than this threshold value or else it would violate the ERP Pinching inequality. Applying the $L^2$ Cauchy-Schwarz inequality to $\langle \tau, \Delta\tau\rangle_{L^2}$ thus implies a pinching inequality for $|\Delta\tau|$ as well which is saturated by ERP $G_2$-Structures.
\end{rmk}

The ERP condition may also be characterized by a pointwise bound on the eigenvalues of the Ricci tensor. This characterization builds off of work of Payne, who shows closed $\GG_2$-Structures can never have negative Ricci curvature in the compact setting \cite{Payne2025}.

\begin{prop} [Ricci Eigenvalue Characterization of ERP] \label{thm: ERPRicciEigenvalues}
Let $\varphi$ be a closed $\GG_2$-Structure with torsion two-form $\tau$ on a $7$-manifold $M$ satisfying
\[
  \textnormal{Ric}(g_\varphi)\leq0
\]
Let $\lambda_{\min}$ denote the smallest eigenvalue of $\textnormal{Ric}(g_\varphi)$. If
\begin{equation}\label{eq:erp-ricci-eigenvalue-bound}
  \lambda_{\min}\geq\frac{1}{3}\textnormal{Scal}(g_\varphi)
\end{equation}
pointwise on $M$, then $\varphi$ is ERP.
\end{prop}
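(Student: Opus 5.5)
The plan is to turn the pointwise eigenvalue bound into a pointwise upper bound on $|\iRic|^2$ that matches the right-hand side of Bryant's pinching inequality \ref{pinchinginequality}. Integrating then forces equality in \ref{pinchinginequality}, which is the definition of ERP. Since \ref{pinchinginequality} is an integral statement, I will work on $M$ compact without boundary, as in the rest of this section and in Payne's setting. If $\tau\equiv0$ there is nothing to prove, so I assume $\varphi$ is not torsion-free.

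\textbf{Step 1: a pointwise bound on $\|\textnormal{Ric}\|^2$.} Fix a point and write $s=\textnormal{Scal}(g_\varphi)=-\frac12|\tau|^2\le 0$. Let $\lambda_1,\dots,\lambda_7$ be the Ricci eigenvalues. The hypotheses give $\lambda_i\in[s/3,0]$ for every $i$ and $\sum_i\lambda_i=s$. The function $\sum_i\lambda_i^2$ is convex, so its maximum over this polytope is attained at a vertex. The vertices with the right sum have exactly three eigenvalues equal to $s/3$ and the remaining four equal to $0$. Indeed, each coordinate of a vertex is $s/3$ or $0$, and the sum constraint forces exactly three copies of $s/3$. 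Hence
\[
\|\textnormal{Ric}\|^2\le 3\Bigl(\tfrac{s}{3}\Bigr)^2=\tfrac{s^2}{3}.
\]
Equality holds precisely when the spectrum is $(s/3,s/3,s/3,0,0,0,0)$, which is the ERP spectrum of \ref{ERPricci}.

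\textbf{Step 2: translate into a bound on $|\iRic|^2$.} The traceless part satisfies
\[
\|\textnormal{Ric}_0\|^2=\|\textnormal{Ric}\|^2-\tfrac{s^2}{7}\le s^2\Bigl(\tfrac13-\tfrac17\Bigr)=\tfrac{4}{21}s^2=\tfrac{1}{21}|\tau|^4.
\]
By the norm identity \ref{eq:i-norm}, this gives $|\iRic|^2=8\|\textnormal{Ric}_0\|^2\le\frac{8}{21}|\tau|^4$ pointwise.

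As a consistency check on normalizations, the ERP values give $\|\textnormal{Ric}\|^2=3\cdot\frac{|\tau|^4}{36}=\frac{|\tau|^4}{12}=\frac{s^2}{3}$. So ERP structures saturate the bound in Step 1, as they should.

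\textbf{Step 3: conclude.} Integrate the bound from Step 2 and compare with \ref{pinchinginequality}:
\[
\frac{8}{21}\int_M|\tau|^4\,\textnormal{vol}\le\int_M|\iRic|^2\,\textnormal{vol}\le\frac{8}{21}\int_M|\tau|^4\,\textnormal{vol}.
\]
So equality holds in \ref{pinchinginequality}, which is the ERP condition. Since the integrand difference has a sign, equality also holds pointwise. This recovers the eigenvalue splitting $-\frac16|\tau|^2$ with multiplicity three and $0$ with multiplicity four, and hence \ref{erppointwise} via \ref{eq:erp-tracefree-ricci}.

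\textbf{Expected obstacle.} The only delicate points are bookkeeping. First, the norm conventions: the identity \ref{eq:i-norm} pairs the form norm on $\Lambda^3_{27}$ with the tensor norm on $S^2_0$, and the constant $\frac{8}{21}$ must come out exactly. The ERP consistency check above confirms that it does. Second, the argument genuinely needs the compactness behind Bryant's integral inequality. Without it, the eigenvalue bound alone only yields the pointwise inequality of Step 2, not ERP.
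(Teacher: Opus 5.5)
Your proof is correct, but it takes a different and more economical route than the paper.

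\textbf{The paper's route.} It first combines Payne's integral identity with $\Ric\le 0$ to obtain the \emph{pointwise} orthogonality $\langle d^{\bar\nabla}\tau,\pi_{27}(\ast(\tau\wedge\tau))\rangle=0$. This turns the decomposition $\i_\varphi(\Ric_0)=-2d^{\bar\nabla}\tau+\frac{2}{3}\pi_{27}(\ast(\tau\wedge\tau))$ into a pointwise defect formula, $|\Ric|^2=\frac{1}{3}\textnormal{Scal}(g_\varphi)^2+\frac{1}{2}|d^{\bar\nabla}\tau|^2$. The paper then compares this with the same pointwise bound $|\Ric|^2\le\frac{1}{3}\textnormal{Scal}(g_\varphi)^2$ that you derive; it gets the bound from $\lambda_i^2\le\frac{s}{3}\lambda_i$, with $s=\textnormal{Scal}(g_\varphi)$ as in your notation.

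\textbf{Your route.} You skip the orthogonality step entirely. You integrate the pointwise bound and sandwich $\int_M|\iRic|^2\,\textnormal{vol}$ against Bryant's inequality \ref{pinchinginequality}. This works because the cross term in the decomposition is $\frac{1}{3}\ast d(\tau\wedge\tau\wedge\tau)$, which integrates to zero regardless. So Bryant's inequality is exactly the integrated form of the paper's defect formula.

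\textbf{Comparison.} Your argument shows that Payne's result is not needed, and that $\Ric\le0$ enters only to cap the eigenvalues at $0$. The paper's argument buys a pointwise identity that exhibits $d^{\bar\nabla}\tau$ as the precise defect. Both use compactness essentially, and you are right to note that it is missing from the statement as written.

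\textbf{Two small points to tighten.}
\begin{itemize}
\item Your vertex claim needs one more line. A vertex of the slice has six coordinates in $\{s/3,0\}$. If $k$ of them equal $s/3$, the seventh is $s(1-k/3)$, and membership in $[s/3,0]$ forces $k\in\{2,3\}$, so the seventh is also in $\{s/3,0\}$. This is special to the bound $s/3$. The inequality $\lambda_i^2\le\frac{s}{3}\lambda_i$ on $[s/3,0]$ gives the bound at once and avoids the issue.
\item Your closing derivation of \ref{erppointwise} from the Ricci spectrum via \ref{eq:erp-tracefree-ricci} is not valid as written, because the spectrum alone does not determine the form $\i_\varphi(\Ric_0)$. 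The pointwise ERP condition should instead come from the equality case of Bryant's inequality, for instance from $\int_M|\iRic|^2\,\textnormal{vol}=4\int_M|d^{\bar\nabla}\tau|^2\,\textnormal{vol}+\frac{8}{21}\int_M|\tau|^4\,\textnormal{vol}$.
\end{itemize}
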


\textit{Proof.} Starting from Payne's result \cite{Payne2025} and the Ricci curvature assumption, we find
\[
  \bigl\langle d\tau,\ast(\tau\wedge\tau)\bigr\rangle=0
\]
Using \ref{eq:canonical-torsion-derivative} and \ref{14dimrepproperties}, we find
\[
  \left\langle d^{\bar\nabla}\tau,
    \pi_{27}\bigl(\ast(\tau\wedge\tau)\bigr)\right\rangle=0
\]
Combining \ref{dtau}, \ref{eq:canonical-torsion-derivative}, and \ref{eq:torsion-square-projection} yields
\[
  \i_\varphi(\textnormal{Ric}_0)
  =-2d^{\bar\nabla}\tau
    +\frac{2}{3}\pi_{27}\bigl(\ast(\tau\wedge\tau)\bigr)
\]
As these two terms are orthogonal, the norm identities \ref{eq:i-norm} and \ref{eq:torsion-square-projection} imply
\[
  |\textnormal{Ric}_0|^2
  =\frac12|d^{\bar\nabla}\tau|^2+\frac1{21}|\tau|^4
\]
Adding $\textnormal{Scal}(g_\varphi)^2/7$ and using the expression for the scalar curvature, we conclude that
\begin{equation}\label{eq:ricci-eigenvalue-erp-defect}
  |\textnormal{Ric}|^2
  =\frac{\textnormal{Scal}(g_\varphi)^2}{3}+\frac12|d^{\bar\nabla}\tau|^2
\end{equation}
On the other hand, every Ricci eigenvalue $\lambda_i$ lies in $[\textnormal{Scal}(g_\varphi)/3,0]$, so $\lambda_i^2\leq(\textnormal{Scal}(g_\varphi)/3)\lambda_i$. Summing over $i$ gives
\[
  |\textnormal{Ric}|^2=\sum_{i=1}^7\lambda_i^2
  \leq\frac{\textnormal{Scal}(g_\varphi)}{3}\sum_{i=1}^7\lambda_i
  =\frac{\textnormal{Scal}(g_\varphi)^2}{3}
\]
Thus the two expressions for $|\textnormal{Ric}|^2$ force $d^{\bar\nabla}\tau=0$ which is equivalent to the ERP condition. \qed

\medskip
The ERP condition is also known to be well-behaved under the Laplacian flow via the work of Fino and Raffero \cite{FinoRaffero2021}. We now prove a converse to their result.

\begin{prop} [Laplacian Flow Characterization of ERP] \label{thm: ERPFlowCharacterization} Let $\varphi$ be a closed $\GG_2$-Structure with constant scalar curvature and torsion two-form $\tau$ on a compact $7$-manifold $M$. Suppose that the Laplacian flow starting at $\varphi$ satisfies
\begin{equation}\label{eq:erp-flow-ansatz}
  \varphi(t)=\varphi+f(t)d\tau
  \qquad
  f(t)=\frac{6}{|\tau|^2}
    \left(\exp\left(\frac{|\tau|^2}{6}t\right)-1\right)
\end{equation}
for all sufficiently small $t\geq0$, where $|\tau|$ is computed using the initial structure. Then $\varphi$ is ERP,  $\varphi(t)$ remains ERP, and the Laplacian flow is eternal.
\end{prop}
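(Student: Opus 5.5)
The plan is to differentiate the ansatz \ref{eq:erp-flow-ansatz} twice in time at $t=0$ and compare with the evolution of the torsion under the Laplacian flow. This should produce the eigenvalue equation $\Delta\tau=\frac{1}{6}|\tau|^2\tau$, after which Proposition \ref{prop: ERPChar} applies.

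For a closed structure we have $d^\ast\varphi=\tau$ and $d^\ast\tau=0$, so the Laplacian flow reads $\partial_t\varphi(t)=\Delta_{\varphi(t)}\varphi(t)=d\tau_t$. Because $|\tau|^2$ is constant, $f$ is a genuine function of $t$ alone. It satisfies $f(0)=0$, $f'(0)=1$ and $f'=1+\frac{|\tau|^2}{6}f$, so $f''(0)=\frac{|\tau|^2}{6}$. The ansatz therefore forces
\[
d\tau_t=f'(t)\,d\tau ,
\]
and differentiating once more at $t=0$ gives
\[
\partial_t\big|_{t=0}\,d\tau_t=d\bigl(\partial_t\tau_t|_{t=0}\bigr)=\frac{|\tau|^2}{6}\,d\tau .
\]

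The key input, and the step I expect to be the main obstacle, is the evolution of the torsion under the closed Laplacian flow. I plan to use the formula of Lotay--Wei \cite{LotayWei2017}, which I recall in the form
\[
\partial_t\tau=\Delta_\varphi\tau+\tfrac13\,d\bigl(|\tau|^2\bigr)\ \text{(up to normalization)} .
\]
What matters is that any non-Laplacian term is exact, or at least has vanishing exterior derivative, once the scalar curvature is constant. The first thing I would do is recompute $\partial_t\tau$ directly from $\tau_t=d^{\ast_t}\varphi(t)$ by linearizing $d^\ast$ along the variation $\partial_t\varphi=d\tau$, decomposed into types. I would check that after applying $d$ the only survivor is $d\Delta\tau$, using the hypothesis $d|\tau|^2=0$. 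Granting this, and since $d$ commutes with the Hodge Laplacian, we obtain
\[
d\Bigl(\Delta\tau-\tfrac16|\tau|^2\tau\Bigr)=0 .
\]

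Next I would set $\beta=\Delta\tau-\frac16|\tau|^2\tau$. Since $d^\ast\tau=0$, we have $\Delta\tau=d^\ast d\tau$, which is coexact. The form $\tau=d^\ast\varphi$ is also coexact, and $|\tau|^2$ is a constant, so $\beta$ is coexact. Being both closed and coexact on a compact manifold, $\beta$ is $L^2$-orthogonal to itself: $\langle\beta,d^\ast\gamma\rangle_{L^2}=\langle d\beta,\gamma\rangle_{L^2}=0$. Hence $\beta=0$, so $\Delta\tau=\frac16|\tau|^2\tau$ pointwise, and Proposition \ref{prop: ERPChar} shows that $\varphi$ is ERP.

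Finally, with $\varphi$ ERP, I would invoke Fino--Raffero \cite{FinoRaffero2021}: the ERP condition is preserved, and the flow is given explicitly by exactly this ansatz. To see this directly, substitute \ref{erppointwise} to get
\[
\varphi(t)=\Bigl(1+\tfrac{|\tau|^2}{6}f\Bigr)\varphi+\tfrac{f}{6}\ast(\tau\wedge\tau) .
\]
The first coefficient equals $e^{|\tau|^2t/6}>0$. Using the splitting $TM=P\oplus Q$, this is a diagonal rescaling of the $P$ and $Q$ components, so it remains positive and closed, and by uniqueness it is the flow. Since $f$ is defined for all $t\in\mathbb{R}$ and positivity is maintained for all $t$, and backward uniqueness also holds, the flow exists for all $t\in\mathbb{R}$ and is eternal.
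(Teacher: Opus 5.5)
\textbf{The gap is the torsion-evolution step you flagged.} The formula you recall is not correct. As written it is not even well-typed, since $d(|\tau|^2)$ is a $1$-form. More importantly, the claim that only $d\Delta\tau$ survives when $|\tau|$ is constant is false. You can derive the exact evolution directly. Differentiate $d\psi_t=-\ast_t\tau_t=\tau_t\wedge\varphi_t$, and use $\partial_t\psi=\ast\bigl(\tfrac43\pi_1-\pi_{27}\bigr)(d\tau)=\tfrac13|\tau|^2\psi-\ast d\tau$ together with $d\ast d\tau=-\ast\Delta\tau$. This gives
\[
\partial_t\tau\wedge\varphi=\ast\Delta\tau+\tfrac13\,d\bigl(|\tau|^2\psi\bigr)-\tau\wedge d\tau .
\]
Since $\beta\wedge\varphi=\ast(2\pi_7\beta-\pi_{14}\beta)$, for constant $|\tau|$ this reads
\[
\pi_{14}(\partial_t\tau)=-\pi_{14}\Delta\tau+\tfrac13|\tau|^2\tau+\pi_{14}\ast(\tau\wedge d\tau).
\]
So with the Hodge Laplacian you are using, the principal term is $-\Delta\tau$, as it must be for a forward heat-type equation. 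The remaining terms are not closed; for example $d\bigl(\tfrac13|\tau|^2\tau\bigr)=\tfrac13|\tau|^2d\tau$. Hence you never reach $d\bigl(\Delta\tau-\tfrac16|\tau|^2\tau\bigr)=0$, and your closed-and-coexact argument has nothing to act on.

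In fact the identity you hoped for cannot hold. It would make $\partial_t\tau-\Delta\tau$ closed, hence $L^2$-orthogonal to $\tau=d^\ast\varphi$. That would give $\langle\partial_t\tau,\tau\rangle_{L^2}=\int_M|d\tau|^2$ for every compact closed structure with constant $|\tau|$. Combined with the exact value computed below, this would force every such structure to be ERP, which is false (e.g.\ left-invariant closed structures on compact nilmanifolds).

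\textbf{What survives, and how to finish.} Your first step is correct: $d\bigl(\partial_t\tau-\tfrac16|\tau|^2\tau\bigr)=0$ at $t=0$. Rather than trying to show this form vanishes, pair it with $\tau=d^\ast\varphi$. This gives
\[
\langle\partial_t\tau,\tau\rangle_{L^2}=\tfrac16\int_M|\tau|^4\,\textnormal{vol}.
\]
On the other hand, $\langle\partial_t\tau,\tau\rangle\,\textnormal{vol}=-\tau\wedge(\partial_t\tau\wedge\varphi)$. Insert the first display, integrate by parts, and use $\langle d\tau,\varphi\rangle=|\tau|^2$ and $\tau\wedge\tau\wedge d\tau=\tfrac13d(\tau\wedge\tau\wedge\tau)$. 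This yields
\[
\langle\partial_t\tau,\tau\rangle_{L^2}=\tfrac13\int_M|\tau|^4\,\textnormal{vol}-\int_M|d\tau|^2\,\textnormal{vol}.
\]
Therefore $\int_M|d\tau|^2\,\textnormal{vol}=\tfrac16\int_M|\tau|^4\,\textnormal{vol}$. By the Global Curvature Equality this is equality in Bryant's pinching inequality, i.e.\ ERP. You only obtain this integrated identity, not $\Delta\tau=\tfrac16|\tau|^2\tau$ pointwise, but the integrated form is all the proof of the ERP Characterization uses.

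Repaired this way, your argument is the paper's in different packaging. In general $T'(0)=\langle\partial_t\tau,\tau\rangle_{L^2}+\int_M d\tau\wedge\partial_t\psi$ for $T=\int_M\partial_t\varphi\wedge\psi$. The ansatz replaces the first term by $f''(0)\int_M|\tau|^2\textnormal{vol}$, so comparing with Bryant's formula is exactly the pairing step above. Your final paragraph, on positivity for all $t$ and eternality via Fino--Raffero, is fine.
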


\textit{Proof.} All norms, projections, and volume forms without an explicit time dependence are taken with respect to the initial structure $\varphi$. We then find
\begin{equation}\label{eq:erp-flow-projections}
  \pi_1(d\tau)=\frac{1}{7}|\tau|^2\varphi
  \qquad \pi_7(d\tau)=0
  \qquad
  |\pi_{27}(d\tau)|^2=|d\tau|^2-\frac{1}{7}|\tau|^4
\end{equation}
Consider the functional
\begin{equation}\label{eq:erp-flow-torsion-functional}
  T(t)=\int_M |\tau(t)|^2_{g_{\varphi(t)}}\,
    \textnormal{vol}_{g_{\varphi(t)}}
  =\int_M \frac{\partial\varphi(t)}{\partial t}\wedge\psi(t)
\end{equation}
The second equality follows from the Laplacian flow equation $\partial_t\varphi(t)=d\tau(t)$.
By Bryant's torsion evolution formula phrased in terms of $d\tau$ \cite{Bryant2005}
\begin{equation}\label{eq:erp-flow-torsion-evolution}
  T'(0)=\frac{2}{3}\int_M |\tau|^4\,\textnormal{vol}
    -2\int_M |d\tau|^2\,\textnormal{vol}
\end{equation}
On the other hand, \ref{eq:erp-flow-ansatz} gives $f'(0)=1$ and $f''(0)=\frac{1}{6}|\tau|^2$. The variation of the dual four-form \cite{Bryant2005} is
\begin{equation}\label{eq:erp-flow-dual-variation}
  \left.\frac{\partial\psi(t)}{\partial t}\right|_{t=0}
  =\ast\left(\frac{4}{3}\pi_1(d\tau)
    +\pi_7(d\tau)-\pi_{27}(d\tau)\right)
\end{equation}
Differentiating the second expression in \ref{eq:erp-flow-torsion-functional} and using \ref{eq:erp-flow-projections} gives us
\begin{align}\label{eq:erp-flow-functional-variation}
  T'(0)
  &=\frac{1}{6}\int_M |\tau|^4\,\textnormal{vol}
    +\int_M\left(\frac{4}{3}|\pi_1(d\tau)|^2
      +|\pi_7(d\tau)|^2-|\pi_{27}(d\tau)|^2\right)
      \textnormal{vol}\notag\\
  &=\frac{1}{6}\int_M |\tau|^4\,\textnormal{vol}
    +\int_M\left(\frac{4}{21}|\tau|^4
      -|d\tau|^2+\frac{1}{7}|\tau|^4\right)
      \textnormal{vol}\notag\\
  &=\frac{1}{2}\int_M |\tau|^4\,\textnormal{vol}
    -\int_M |d\tau|^2\,\textnormal{vol}
\end{align}
Comparing \ref{eq:erp-flow-torsion-evolution} with \ref{eq:erp-flow-functional-variation} gives
\begin{equation}\label{eq:erp-flow-integral-equality}
  \int_M |d\tau|^2\,\textnormal{vol}
  =\frac{1}{6}\int_M |\tau|^4\,\textnormal{vol}
\end{equation}
Consequently, $\varphi$ is ERP. \qed

\begin{rmk}
The proof only uses the initial acceleration of the curve. This acceleration cannot be larger than that of the ERP solution and equality holds precisely when the initial structure is ERP.
\end{rmk}

\medskip
Lastly, we characterize the ERP condition by a bound on the $27$-dimensional component of the Weyl curvature.

\begin{prop} [Weyl Curvature Characterization of ERP] \label{thm: ERPWeylCharacterization} Let $\varphi$ be a closed $\GG_2$-Structure with torsion two-form $\tau$ on a $7$-manifold $M$. If $d(\tau\wedge\tau)=0$, then
\begin{equation}\label{eq:weyl-erp-defect}
  \|W_{27}\|^2
  =\frac{9}{320}|\tau|^4
    +\frac{27}{70}|d^{\bar\nabla}\tau|^2
\end{equation}
Consequently, $\varphi$ is ERP if and only if the following pointwise bound holds
\begin{equation}\label{eq:weyl-erp-pinching}
  \|W_{27}\|^2
  \leq\frac{9}{80}\textnormal{Scal}(g_\varphi)^2
\end{equation}
\end{prop}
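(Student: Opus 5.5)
The plan is to write $\textnormal{Ric}^{W}$ as an explicit linear combination of $d^{\bar\nabla}\tau$ and $\pi_{27}(\ast(\tau\wedge\tau))$. We then show that these two $\Lambda^3_{27}$-valued pieces are pointwise orthogonal under the hypothesis $d(\tau\wedge\tau)=0$, and read off the norm. All work is on the $\Lambda^3_{27}$ side via $\i_\varphi$, using $|\i_\varphi(h)|^2=8|h|^2$ from \ref{eq:i-norm}.

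\emph{Step 1: the trace-free Ricci tensor.} Combining \ref{dtau}, \ref{eq:canonical-torsion-derivative} and \ref{eq:torsion-square-projection} gives, exactly as in the proof of Proposition \ref{thm: ERPRicciEigenvalues},
\[
\i_\varphi(\textnormal{Ric}_0)=-2d^{\bar\nabla}\tau+\tfrac23\pi_{27}(\ast(\tau\wedge\tau)).
\]

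\emph{Step 2: the traceless $\ast$-Ricci tensor.} Substitute $d\tau=d^{\bar\nabla}\tau+\frac16|\tau|^2\varphi+\frac16\ast(\tau\wedge\tau)$ into \ref{eq:star-ricci-torsion} and solve for $\i_\varphi(\textnormal{Ric}^{\ast}_0)$. The $\varphi$-coefficient is $8(\frac16-\frac3{28})=\frac{10}{21}$ and the $\ast(\tau\wedge\tau)$-coefficient is $8(\frac16+\frac14)=\frac{10}{3}$. Their ratio is exactly $\frac17$, so they assemble into $\pi_{27}$:
\[
\i_\varphi(\textnormal{Ric}^{\ast}_0)=8\,d^{\bar\nabla}\tau+\tfrac{10}{3}\pi_{27}(\ast(\tau\wedge\tau)).
\]
This is consistent with \ref{eq:erp-star-ricci-form} when $d^{\bar\nabla}\tau=0$. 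Inserting both expressions into \ref{eq:weyl-ricci-definition} gives
\[
\i_\varphi(\textnormal{Ric}^W)=-\tfrac{12}{5}d^{\bar\nabla}\tau-\tfrac{7}{10}\pi_{27}(\ast(\tau\wedge\tau)).
\]

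\emph{Step 3: orthogonality, where the hypothesis enters.} Pointwise,
\[
\langle d\tau,\ast(\tau\wedge\tau)\rangle\,\textnormal{vol}=d\tau\wedge\tau\wedge\tau .
\]
Since $d(\tau\wedge\tau)=2\,d\tau\wedge\tau$, the hypothesis gives $d\tau\wedge\tau=0$, so this pairing vanishes. As in the proof of Proposition \ref{thm: ERPRicciEigenvalues}, using \ref{14dimrepproperties} and $\langle\varphi,\pi_{27}(\cdot)\rangle=0$, this converts into $\langle d^{\bar\nabla}\tau,\pi_{27}(\ast(\tau\wedge\tau))\rangle=0$ pointwise. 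Then \ref{eq:torsion-square-projection} yields
\[
8|\textnormal{Ric}^W|^2=\tfrac{144}{25}|d^{\bar\nabla}\tau|^2+\tfrac{49}{100}\cdot\tfrac67|\tau|^4,
\]
and multiplying by $\frac{15}{28}\cdot\frac18$ gives \ref{eq:weyl-erp-defect}.

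\emph{Step 4: the equivalence.} Since $\textnormal{Scal}^2=\frac14|\tau|^4$, we have $\frac{9}{80}\textnormal{Scal}^2=\frac{9}{320}|\tau|^4$. Hence, under $d(\tau\wedge\tau)=0$, the bound \ref{eq:weyl-erp-pinching} holds if and only if $d^{\bar\nabla}\tau=0$, that is, if and only if $\varphi$ is ERP. For the forward direction we must also check that ERP structures satisfy the hypothesis $d(\tau\wedge\tau)=0$. Wedging \ref{erppointwise} with $\tau$ and using $\varphi\wedge\tau=-\ast\tau$ (valid for $\tau\in\Lambda^2_{14}$) together with \ref{14dimrepproperties}, we get
\[
d\tau\wedge\tau=\tfrac1{18}\ast(\tau\wedge\tau\wedge\tau)\,\psi .
\]
This vanishes by Proposition \ref{thm: BryantERPResults} in the compact case.

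\emph{Main obstacle.} The main obstacle I expect is bookkeeping of normalizations in Step 3: matching form norms versus tensor norms and checking the sign conventions in \ref{eq:star-ricci-torsion} and \ref{eq:weyl-ricci-definition}. The coefficients above reproduce $\frac{9}{320}$ and $\frac{27}{70}$, which is a good consistency check. A secondary issue is that the forward direction of Step 4 invokes $\tau^3=0$. If a non-compact statement is intended, I would instead derive $\tau\wedge\tau\wedge\tau=0$ from $d^2\tau=0$ applied to \ref{erppointwise}, as in Bryant's local argument.
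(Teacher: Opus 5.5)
Your proposal is correct and follows the paper's proof essentially step for step. You use the same decomposition $\i_\varphi(\textnormal{Ric}^W)=-\frac{12}{5}d^{\bar\nabla}\tau-\frac{7}{10}\pi_{27}(\ast(\tau\wedge\tau))$, the same pointwise orthogonality coming from $d(\tau\wedge\tau)=0$, and the same norm bookkeeping; the only difference is that you derive the orthogonality directly from $\langle d\tau,\ast(\tau\wedge\tau)\rangle\textnormal{vol}=d\tau\wedge\tau\wedge\tau$, where the paper cites the Cleyton--Ivanov identity $\langle d^{\bar\nabla}\tau,\pi_{27}(\ast(\tau\wedge\tau))\rangle=\frac13\ast d(\tau\wedge\tau\wedge\tau)$. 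Your extra check that ERP structures satisfy $d(\tau\wedge\tau)=0$ is harmless but not needed, since the equivalence is asserted under that standing hypothesis.
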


\textit{Proof.} Cleyton--Ivanov \cite[Equation (6.33)]{CleytonIvanov2008} show that
\begin{equation}\label{eq:canonical-torsion-divergence}
  \left\langle d^{\bar\nabla}\tau,
    \pi_{27}\bigl(\ast(\tau\wedge\tau)\bigr)\right\rangle
  =\frac{1}{3}\ast d(\tau\wedge\tau\wedge\tau)
\end{equation}
Since $d(\tau\wedge\tau)=0$, the identity
\[
  d(\tau\wedge\tau\wedge\tau)
  =\frac{3}{2}\tau\wedge d(\tau\wedge\tau)=0
\]
shows that the two forms in the Cleyton--Ivanov identity are orthogonal. Combining the formulas for $d\tau$, $\textnormal{Ric}^{\ast}_0$, and $d^{\bar\nabla}\tau$ with the definition \ref{eq:weyl-ricci-definition} gives
\begin{equation}\label{eq:weyl-ricci-torsion}
  \i_\varphi(\textnormal{Ric}^{W})
  =-\frac{12}{5}d^{\bar\nabla}\tau
    -\frac{7}{10}\pi_{27}\bigl(\ast(\tau\wedge\tau)\bigr)
\end{equation}
Taking norms in the preceding identity and applying the orthogonality above together with the norm identities from Section 2, we obtain
\[
  |\textnormal{Ric}^{W}|^2
  =\frac{18}{25}|d^{\bar\nabla}\tau|^2
    +\frac{21}{400}|\tau|^4
\]
The definition of $\textnormal{Ric}^{W}$ now gives the stated expression for $\|W_{27}\|^2$. Using the expression for the scalar curvature, the stated bound is equivalent to $\|W_{27}\|^2\leq\frac{9}{320}|\tau|^4$. The expression for $\|W_{27}\|^2$ then forces $d^{\bar\nabla}\tau=0$, which corresponds to the ERP condition. \qed

\begin{rmk}\label{rmk:torsion-properties-not-erp}
The simplicity and harmonicity of $\tau\wedge\tau$, together with constant scalar curvature, do not characterize the ERP condition, even on compact manifolds. Examples follow from the work of Kath and Lauret \cite{KathLauret2020}, where they identify compact ERP $G_2$-Structures; hence an additional hypothesis is required to deduce the ERP condition from these torsion properties. We used the constraint on $W_{27}$ above, but there are likely other equivalent curvature constraints which imply the ERP condition.
\end{rmk}

 We may also use the exceptional pointwise control over the ERP $G_2$-Structure along with a Bochner argument to heavily constrain the automorphism group of these structures. 

\begin{prop} [ERP Automorphism Group] \label{prop: ERPAutomorphism} An ERP $G_2$-Structure $\varphi$ on a compact 7-manifold $M$ does not have any continuous symmetries.

 \end{prop}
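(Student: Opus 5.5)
Suppose $X$ is an infinitesimal automorphism, i.e.\ a vector field with $\mathcal{L}_X\varphi=0$; the aim is to show $X=0$. Since $g_\varphi$ and $\psi$ are determined algebraically by $\varphi$, the flow of $X$ also preserves them, so $X$ is a Killing field of $g_\varphi$. It is then natural to use the Bochner identity for Killing fields on a compact manifold,
\[
\int_M \|\nabla X\|^2\,\textnormal{vol}=\int_M \textnormal{Ric}(X,X)\,\textnormal{vol}.
\]
By \ref{ERPricci}, $\textnormal{Ric}\leq0$ with kernel exactly $Q$. Hence $\nabla X=0$ and $\textnormal{Ric}(X,X)=-\frac16|\tau|^2|X_P|^2=0$. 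Since $|\tau|^2$ is a nonzero constant by Proposition \ref{thm: BryantERPResults} (we may assume $\tau\neq0$; the torsion-free case is excluded, since ERP structures are not torsion-free), this gives $X\in\Gamma(Q)$ and $X$ parallel.

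Next, use the closed condition. Because $d\varphi=0$, Cartan's formula turns $\mathcal{L}_X\varphi=0$ into $d(X\lrcorner\varphi)=0$. Because $X$ is parallel, we also have $\nabla(X\lrcorner\varphi)=X\lrcorner\nabla\varphi$, and \ref{G2TorsionTensor} gives $\nabla_m\varphi_{ijk}=T_{mn}\psi_{nijk}$. Antisymmetrizing, $d(X\lrcorner\varphi)$ becomes an expression linear in $\tau\otimes X$ contracted with $\psi$. Equivalently, since $X$ is Killing, $\mathcal{L}_X=\nabla_X-\nabla X$ acting on forms, and $\nabla X=0$, so
\[
0=\mathcal{L}_X\varphi=\nabla_X\varphi=T(X)^\flat\lrcorner\psi \quad\text{up to sign}.
\]
The contraction map $\Lambda^1\to\Lambda^3_7$, $v\mapsto v\lrcorner\psi$, is injective. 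Hence $X\lrcorner\tau=0$.

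The final step, which I expect to be the main obstacle, is to derive a contradiction from $X\lrcorner\tau=0$ with $X$ parallel and nonzero. Here is the first thing I would try. From $d\tau=\frac16|\tau|^2\varphi+\frac16\ast(\tau\wedge\tau)$ and $\mathcal{L}_X\tau=0$ (as $X$ preserves $\varphi$), we get $X\lrcorner d\tau=\mathcal{L}_X\tau-d(X\lrcorner\tau)=0$. Contracting the ERP identity with $X$ then gives
\[
|\tau|^2\, X\lrcorner\varphi=-X\lrcorner\ast(\tau\wedge\tau)=\pm\ast(X^\flat\wedge\tau\wedge\tau).
\]
Now $X\lrcorner\tau=0$ means $X^\flat\wedge\tau\wedge\tau=\ast$-dual of a form annihilated suitably. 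Pointwise, $\tau$ lies in $\mathfrak{g}_2$ and annihilates $X$, so it lies in the stabilizer $\mathfrak{su}(3)$ of $X$. Decomposing $\Lambda^2$ and $\Lambda^3$ relative to $X$ and the induced $SU(3)$-structure on $X^\perp$, the left side has a nonzero component along $\omega$ (the Kähler form, $X\lrcorner\varphi=\omega$) of size $|\tau|^2|X|$. On the right side, for $\tau\in\mathfrak{su}(3)$ the form $\tau\wedge\tau$ pairs with $\omega\wedge\omega$ with the sign fixed by primitivity, giving $\langle\tau\wedge\tau,\omega\wedge\omega\rangle$ proportional to $-|\tau|^2$, where $\tau\wedge\tau\wedge\omega=-|\tau|^2\omega^3/6$ for primitive $(1,1)$-forms. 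I would check that the resulting constants are incompatible, i.e.\ that the coefficient of $\omega$ on the right is $\frac{1}{2}|\tau|^2$ or similar rather than $|\tau|^2$. That mismatch forces $|\tau|^2|X|=0$, so $X=0$.

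If the constant check fails, my fallback is the simplicity of $\tau\wedge\tau$. Since $X\in Q$ and $Q$ is calibrated by $\tau\wedge\tau$, the 4-plane $Q$ contains $X$, so $X\lrcorner(\tau\wedge\tau)\neq0$. But $X\lrcorner(\tau\wedge\tau)=2(X\lrcorner\tau)\wedge\tau=0$, a contradiction. This fallback argument looks cleaner and may be the intended route.
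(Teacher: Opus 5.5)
Your proof is correct. After the step you share with the paper (Killing field, Bochner, hence $\nabla X=0$ and $\textnormal{Ric}(X,X)=0$, so $X\in Q$), it finishes by a different route.

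The paper uses only $X\in Q$ from Bochner. It substitutes $\varphi=\frac{6}{|\tau|^2}\bigl(d\tau-\frac16\ast(\tau\wedge\tau)\bigr)$ into $X\lrcorner\varphi\wedge X\lrcorner\varphi\wedge\varphi=6|X|^2\textnormal{vol}$. It drops the $\ast(\tau\wedge\tau)$ term because that form lives on $P$. It then uses $d(X\lrcorner\varphi)=0$ to recognize the remainder as $d(X\lrcorner\varphi\wedge X\lrcorner\varphi\wedge\tau)$, so a second integration gives $\int_M|\tau|^2|X|^2\textnormal{vol}=0$.

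You instead exploit parallelism: $\mathcal{L}_X\varphi=\nabla_X\varphi$. Then $\nabla_m\varphi_{ijk}=T_{mn}\psi_{nijk}$, together with injectivity of $v\mapsto v\lrcorner\psi$, yields $X\lrcorner\tau=0$, i.e.\ $X\in\ker\tau=P$. This contradicts $X\in Q$ pointwise. Your fallback phrasing is exactly right: $X\lrcorner(\tau\wedge\tau)=2(X\lrcorner\tau)\wedge\tau$, and $\tau\wedge\tau$ is a nonzero multiple of the volume form of $Q$.

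What each route buys:
\begin{itemize}
\item Yours is pure linear algebra after Bochner. It needs neither the ERP formula for $d\tau$ nor a second use of Stokes.
\item It also isolates a general fact: a parallel infinitesimal automorphism of any closed $G_2$-structure lies in $\ker\tau$. The only ERP-specific input is then that the Ricci null space $Q$ is complementary to $\ker\tau$.
\item The paper's route asks less of $X$ (only $X\in Q$ and $d(X\lrcorner\varphi)=0$) and avoids the torsion formula for $\nabla\varphi$.
\end{itemize}

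Your first attempt is not a proof as written, since the constant is only guessed, but it can be closed. From $X\lrcorner d\tau=0$ you get $|\tau|^2|X\lrcorner\varphi|=|X\lrcorner\ast(\tau\wedge\tau)|$. Since $|X\lrcorner\varphi|^2=3|X|^2$ and $|X\lrcorner\ast(\tau\wedge\tau)|\le|X|\,|\tau|^2$, this forces $X=0$ because $\tau\neq0$.
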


 \noindent \textit{Proof.} By definition any vector $X$ in the Lie algebra of the automorphism group satisfies

 \begin{equation}
L_{X}\varphi = 0
 \end{equation}

 Such a vector field is automatically Killing. Moreover, since the Ricci curvature of an ERP $G_2$-Structure is non-positive, from Bochner's theorem any such vector field is parallel and satisfies

 \begin{equation}
\textnormal{Ric}(X,X) = 0
 \end{equation}

 Let the candidate automorphism vector field at a point take the following form in the $G_2$-adapted frame in terms of constant coefficients $\{a_{i}\}$.

 \begin{equation}
X = a_{1}e_{1} + ... + a_{7}e_{7}
 \end{equation}

In terms of these constants the Ricci condition arising from the Bochner formula takes the following form given the pointwise expression for the Ricci tensor in this adapted frame \ref{erppointwise}. Here we without loss of generality choose to label $P_{x} = \langle e_{1}, e_{2}, e_{3}\rangle$ and $Q_{x} = \langle e_{4}, e_{5}, e_{6}, e_{7}\rangle$ in terms of the adapted frame for the $G_2$-Structure at the point \cite{FinoRaffero2021}. 

 \begin{equation}
-\frac{1}{6}|\tau|^2\Big(a_{1}^2 + a_{2}^2 + a_{3}^2 \Big) = 0
 \end{equation}

\noindent As a consequence the vector field must actually pointwise take the form

 \begin{equation}\label{Xpointwise}
X = a_{4}e_{4} + a_{5}e_{5} + a_{6}e_{6} +a_{7}e_{7}
 \end{equation}

In other words, it is supported in the subbundle Q. Following from the expression for the Riemannian metric \ref{G2metric}, for any vector field $X$

 \begin{equation}\label{XNorm}
X\lrcorner\varphi\wedge X\lrcorner\varphi \wedge \varphi = 6|X|^2\textnormal{vol}
 \end{equation}

 Utilizing the fact that the $G_2$-Structure is ERP, we may solve for the three-form $\varphi$ in terms of the torsion two-form in the following manner. 

 \begin{equation}
\varphi = \frac{6}{|\tau|^2}(d\tau - \frac{1}{6}\ast(\tau\wedge\tau))
 \end{equation}

 Note in this expression that since the structure is ERP we have that the scalar curvature is a non-vanishing constant. Plugging this into \ref{XNorm} and multiplying through by the constant $|\tau|^2$ we get

 \begin{equation}
X\lrcorner\varphi\wedge X\lrcorner\varphi\wedge d\tau -\frac{1}{6} X\lrcorner\varphi\wedge X\lrcorner\varphi\wedge\ast(\tau\wedge\tau) = |\tau|^2|X|^2\textnormal{vol}
 \end{equation}

Using the Leibniz property of the interior product on the second term yields

\begin{equation}
X\lrcorner\varphi\wedge X\lrcorner\varphi\wedge d\tau -\frac{1}{6} X\lrcorner\varphi\wedge \varphi\wedge X\lrcorner\ast(\tau\wedge\tau) = |\tau|^2|X|^2\textnormal{vol}
\end{equation}

But since  $\ast(\tau\wedge\tau)$ is supported in $P$, the contraction of $X$ into the form vanishes at every point. With this term vanishing, using the fact that $X$ is a symmetry implies that the left-hand side is exact.

\begin{equation}
d(X\lrcorner\varphi \wedge X\lrcorner\varphi\wedge\tau) = |\tau|^2|X|^2\textnormal{vol}
\end{equation}

Integrating and using compactness implies that $X=0$ \qed

\begin{rmk}
All known examples of ERP $G_2$-Structures on compact 7-manifolds are locally homogeneous and originated from non-compact homogeneous spaces. This result concerning the automorphism group proves that none of the original symmetries present in the non-compact geometry survive the finite quotient. 
\end{rmk}

\section{Natural Perturbations of General ERP $\GG_{2}$-Structures}

We now fix a particular ERP $\GG_2$-Structure. By definition $d\tau$ satisfies the following expression. 

\begin{equation}
d\tau = \frac{1}{6}|\tau|^2\varphi + \frac{1}{6}\ast(\tau\wedge\tau) 
\end{equation}

Noting that the scalar curvature is a non-vanishing constant, we may solve for $\varphi$ in terms of $d\tau$ and $\ast(\tau\wedge\tau)$. 

\begin{equation}
    \varphi = \frac{6}{|\tau|^2} \Big(d\tau - \frac{1}{6}\ast(\tau\wedge\tau) \Big)
\end{equation}

\noindent We can then build new differential forms by tailoring the scale of the differential forms in $\varphi$ 

\begin{align}
    \tilde{\varphi} & = a d\tau - b\ast(\tau\wedge\tau) \\
    & = a \Big(\frac{1}{6}|\tau|^2\varphi + \frac{1}{6}\ast(\tau\wedge\tau)\Big) - b \ast(\tau\wedge\tau) \\
    & \equiv \tilde{a}\varphi - \tilde{b}\ast(\tau\wedge\tau)
\end{align}

Fixing the overall scale, we could view this ansatz as perturbing the original ERP $\GG_2$-Structure by a multiple of $\ast(\tau\wedge\tau)$. We could then compare this with the $\GG_2$-Structures considered by Fino and Raffero \cite{FinoRaffero2021}, which were also perturbations of ERP $\GG_2$-Structures but manifestly within the cohomology class of the form.

\begin{equation}
    \tilde{\varphi} = \varphi + ad\tau 
\end{equation}

Plugging in the expression for $d\tau$ for an ERP $G_2$-Structure, it is apparent that their structures have different relative scales corresponding to the $\varphi$ and $\ast(\tau\wedge\tau)$ terms. In fact, these types of perturbations are more general because we can decouple the relative scales of $\varphi$ and $\ast(\tau\wedge\tau)$ by perturbing in this manner. What is non-trivial about our ansatz is that the three-forms defined in this manner are closed as a consequence of the form $\ast(\tau\wedge\tau)$ being closed for an ERP $\GG_2$-Structure. \\ 

We also need to check when our ansatz is positive, in which case it would define a closed $\GG_2$-Structure. Noting that the tangent bundle splits orthogonally into pieces calibrated by $\tau\wedge\tau$ and its Hodge dual, we may note that in a $\GG_2$-adapted frame at a point \cite{FinoRaffero2021}

\begin{equation}
    \ast(\tau\wedge\tau) = -|\tau|^2 e^{123}
\end{equation}

\noindent In such a frame then at a point the three-form $\tilde{\varphi}$ is given as

\begin{equation}
\tilde{\varphi} = (a+b|\tau|^2)e^{123} + a(- e^{167} - e^{527} - e^{563} + e^{415} +
e^{426} + e^{437})
\end{equation}

As such, the form will be positive if $(a+b|\tau|^2)$ and $a$ have the same sign and in particular don't vanish. Generically then, the ansatz for $\tilde{\varphi}$ will constitute a closed $\GG_2$-Structure. \\

In the context of their perturbations, Fino and Raffero found that $\varphi + a d\tau$ is ERP for all $a$ such that the form is positive. We can generalize this result in the context of our ansatz. 

\begin{prop} [General Perturbations of ERP] \label{prop: GeneralERPPert} For $\varphi$ an ERP $\GG_2$-Structure, a perturbation of the form $\tilde{\varphi} = a\varphi - b\ast(\tau\wedge\tau) $ is ERP whenever it is positive. 
\end{prop}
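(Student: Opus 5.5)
The plan is to reduce Proposition \ref{prop: GeneralERPPert} to the Fino--Raffero result for perturbations $\varphi+s\,d\tau$, combined with invariance of the ERP condition under constant rescaling. The key input is the ERP identity \ref{erppointwise}, together with the fact that $|\tau|^2$ is a nonzero constant by Proposition \ref{thm: BryantERPResults}. These give, for any constant $s$,
\[
\varphi+s\,d\tau=\Bigl(1+\tfrac{s}{6}|\tau|^2\Bigr)\varphi+\tfrac{s}{6}\ast(\tau\wedge\tau),
\]
which is again a constant-coefficient combination of $\varphi$ and $\ast(\tau\wedge\tau)$. So I will try to write $a\varphi-b\ast(\tau\wedge\tau)=c\,(\varphi+s\,d\tau)$ for suitable constants $c,s$.

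Matching coefficients, I need $c(1+\tfrac{s}{6}|\tau|^2)=a$ and $c\,\tfrac{s}{6}=-b$. Set $A=a+b|\tau|^2$. Solving gives
\[
s=-\frac{6b}{A},\qquad 1+\tfrac{s}{6}|\tau|^2=\frac{a}{A},\qquad c=A.
\]
This is legitimate because positivity of $\tilde\varphi$ forces $A\neq0$. In the adapted frame this is exactly the coefficient of $e^{123}$, as in the computation preceding the statement. Positivity also forces $a\neq0$ and $a,A$ to have the same sign. Hence $\tilde\varphi=A(\varphi+s\,d\tau)$, and $\varphi+s\,d\tau=\tilde\varphi/A$ has frame coefficients $1$ and $a/A>0$, so it is itself positive.

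With that, Fino--Raffero \cite{FinoRaffero2021} gives that $\varphi+s\,d\tau$ is ERP. It remains to show that the ERP condition is preserved by multiplying a closed $\GG_2$-structure by a nonzero constant $c$. I will check this directly from \ref{erppointwise}. Under $\varphi\mapsto c\varphi$ one has $g\mapsto c^{2/3}g$, $\psi\mapsto c^{4/3}\psi$, and $d\psi=-\ast\tau$ gives $\tau\mapsto c^{1/3}\tau$. Then $|\tau|^2\mapsto c^{-2/3}|\tau|^2$, and $\ast(\tau\wedge\tau)\mapsto c^{2/3}\cdot c^{(7-8)/3}\ast(\tau\wedge\tau)=c^{1/3}\ast(\tau\wedge\tau)$. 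Thus both sides of \ref{erppointwise} scale by $c^{1/3}$, and the condition is invariant.

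For $c=A<0$ I will handle orientation: $-\varphi$ is a positive form for the reversed orientation with the same metric. I will check that \ref{erppointwise} is still preserved, with the sign changes of $\ast$, $\tau$ and $\varphi$ compensating. Alternatively, I will simply use $c^{1/3}$ as the real cube root in the above bookkeeping.

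The main obstacle I expect is this scaling and sign bookkeeping, especially the negative-$A$ case and confirming that "positive" in the statement matches the positivity hypothesis of Fino--Raffero. As a fallback, if the citation's exact range of $s$ is unclear, I would verify ERP for $\tilde\varphi$ directly. In the adapted frame, I would rescale $P$ by $\lambda=A^{1/3}$ and $Q$ by $\mu=(a/\lambda)^{1/2}$. This shows $\tilde\psi$ is a constant combination of $\psi$ and $\tau\wedge\tau$, so $d\tilde\psi=\mathrm{const}\cdot d\psi$ and $\tilde\tau$ is a constant multiple of $\tau$. Then \ref{erppointwise} for $\tilde\varphi$ reduces to matching the two constant coefficients of $\varphi$ and $\ast(\tau\wedge\tau)$.
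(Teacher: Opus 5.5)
Your argument is correct, but it takes a different route from the paper's. The paper never invokes Fino--Raffero. It rescales the adapted coframe by $f$ on $P$ and $g$ on $Q$, with $f^3=a+b|\tau|^2$ and $fg^2=a$. It then writes $\tilde\psi$ as a constant combination of $\psi$ and $\tau\wedge\tau$, and uses $d(\tau\wedge\tau)=0$ to get $d\tilde\psi=f^2g^2\,d\psi$, hence $\tilde\tau=\frac{g^2}{f}\tau$. Finally it computes $\tilde\ast(\tilde\tau\wedge\tilde\tau)=f\ast(\tau\wedge\tau)$ and $|\tilde\tau|^2_{\tilde g}$, and matches the coefficients of $\varphi$ and $\ast(\tau\wedge\tau)$ in \ref{erppointwise}. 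That is exactly your fallback.

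Your main route instead writes $\tilde\varphi=A(\varphi+s\,d\tau)$ with $A=a+b|\tau|^2$ and $s=-6b/A$, then combines the Fino--Raffero statement with homothety invariance of \ref{erppointwise}. The scaling bookkeeping is right: both sides pick up $c^{1/3}$, and the real cube root handles $c<0$. Your worry about the range is unfounded: $f(t)=\frac{6}{|\tau|^2}\bigl(e^{|\tau|^2t/6}-1\bigr)$ maps $\mathbb{R}$ onto $(-6/|\tau|^2,\infty)$, which is exactly the condition $a/A>0$.

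The one step you assert without proof is the converse of the paper's positivity criterion, namely that positivity of $\tilde\varphi$ forces $A\neq0$ and $aA>0$. It follows from \ref{G2metric}. In the adapted frame, $X\lrcorner\tilde\varphi\wedge Y\lrcorner\tilde\varphi\wedge\tilde\varphi$ is diagonal, with entries proportional (by a common factor) to $Aa^2$ on $P$ and $a^3$ on $Q$, and it must be definite.

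As for what each approach buys: the paper's computation is self-contained and gives explicit data for the perturbed structure. Your $b=0$ scaling check shows the correct value there is $|\tilde\tau|^2_{\tilde g}=|\tau|^2/f^2$, not the $f^{-4}$ printed in the paper. Your reduction is shorter and more informative, because it exhibits $\tilde\varphi=A\,\varphi(t_*)$ with $t_*=\frac{6}{|\tau|^2}\ln(a/A)$. So every positive member of the family is a constant multiple of a point on the eternal Laplacian flow line through $\varphi$. Modulo homotheties, the proposition is therefore equivalent to the Fino--Raffero result rather than a genuine extension of it.
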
 

\textit{Proof.} We start by finding the torsion $\tilde{\tau}$ associated with $\tilde{\varphi}$. By definition, it satisfies

\begin{equation}
d\tilde{\psi} = -\ast\tilde{\tau}
\end{equation}

To compute $\tilde{\psi}$, we introduce an auxiliary frame defined in terms of the original ERP $\GG_2$-adapted frame by scaling the frame by functions $f$ and $g$.

\begin{align}
    & \tilde{e_{i}} = fe^{i}\hspace{2mm} \textnormal{for} \hspace{2mm} i\in{1,2,3} \\
    & \tilde{e_{i}} = ge^{i} \hspace{2mm}\textnormal{for} \hspace{2mm} i\in{4,5,6,7} 
\end{align}

Here, the prefactors $f$ and $g$ are given in terms of $a$ and $b$ as 

\begin{align}
    & f^3 = a + b|\tau|^2 \\
    & fg^2 = a
\end{align}

We can then compute the Hodge star of $\tilde{\varphi}$ and resolve it into a weighted sum of $\psi$ and $\tau\wedge\tau$ associated to the original ERP $\GG_2$-Structure. 

\begin{equation}
    \tilde{\psi} = f^2g^2\psi + \frac{f^2g^2 - g^2}{|\tau|^2}(\tau\wedge\tau)
\end{equation}

Noting that $\tau\wedge\tau$ is closed we can then see that

\begin{equation}
    d\tilde{\psi} = f^2g^2 d\psi
\end{equation}

This is saying that 

\begin{equation}
    \tilde{\tau}\wedge\tilde{\psi} = f^2g^2 \tau\wedge\psi
\end{equation}

Now we can solve for $\varphi$ associated to the original ERP $\GG_2$-Structure in terms of $\tilde{\varphi}$ and $\ast(\tau\wedge\tau)$.

\begin{equation}
    \varphi = \frac{1}{fg^2} \Big(\tilde{\varphi} + \frac{f^3 - fg^2}{|\tau|^2}\ast(\tau\wedge\tau) \Big)
\end{equation}

We may see that \ref{14dimrepproperties} applied to an ERP $G_2$-Structure since $\tau\wedge\tau\wedge\tau=0$ reduces to

\begin{equation}
\tau\wedge\ast(\tau\wedge\tau) = |\tau|^2\ast\tau
\end{equation}

\noindent Wedging $\varphi$ with $\tau$ then reduces to 

\begin{equation}
\frac{f^2}{g^2} \tilde{\tau}\wedge\tilde{\varphi} = f \tau\wedge\tilde{\varphi}
\end{equation}

Since wedging by any $G_2$-Structure defines an isomorphism between two-forms and five-forms, this implies that $\tilde{\tau}$ is proportional to $\tau$.

\begin{equation}
\tilde{\tau} = \frac{g^2}{f}\tau
\end{equation}

Now we want to check when these closed $\GG_2$-Structures are ERP. To do so, we will also need to compute $|\tilde{\tau}|^2$ and $\ast(\tilde{\tau}\wedge\tilde{\tau})$. The Hodge star and norm are taken with respect to the metric defined by the new $\GG_2$-Structure, not the original ERP one.

\begin{align}
    &\ast(\tilde{\tau}\wedge\tilde{\tau}) = f\ast(\tau\wedge\tau) \\
    &|\tilde{\tau}|^2 = \frac{1}{f^4}|\tau|^2
\end{align}

\noindent Plugging in for each of these quantities we see that since the original data was ERP.

\begin{equation}
    \frac{g^2}{f}d\tau = \frac{g^2}{f} \Big( \frac{1}{6}|\tau|^2\varphi + \frac{1}{6}\ast(\tau\wedge\tau) \Big)
\end{equation}

As such, the perturbed structures are also always ERP regardless as to the values of $f$ and $g$. In addition, both the scalar curvature and the Ricci curvature norm depend only on the parameter $f$ and not on $g$. \qed 

\begin{rmk}
Following from the work of Fino and Raffero, the solution to the Laplacian flow starting at an ERP $G_2$-Structure $\varphi_{0}$ takes the form $\varphi(t) = \varphi_{0} + f(t)d\tau_{0}$ where $f(t)$ is a function exponential in time. As such, the ERP $G_2$-Structures considered here which were up to overall scale of the form $\tilde{\varphi}=\varphi_0 + a\ast_{0}(\tau_{0}\wedge\tau_{0})$ cannot be accessed by the Laplacian flow starting at $\varphi_{0}$ at any time. 
\end{rmk}

\section{Perturbations of Specific Examples}

We now consider three examples from the classification of left-invariant ERP $G_2$-Structures due to Lauret and Nicolini \cite{LauretNicolini2019b}. The types $\mu_B$, $\mu_{M2}$, and $\mu_J$ each admit perturbations obtained by varying two exact summands independently. The type $\mu_B$ is Bryant's example while $\mu_J$ is the example for which Kath and Lauret construct a compact quotient \cite{KathLauret2020}. Throughout this section we use the positive three-form fixed in the classification
\[
\varphi_L=e^{127}+e^{347}+e^{567}+e^{135}-e^{146}-e^{236}-e^{245}.
\]

\begin{prop}[Perturbations of Classified ERP $G_2$-Structures]\label{prop: ClassifiedERPPerturbation}
For the types $\mu_B$, $\mu_{M2}$, and $\mu_J$ in the classification of Lauret and Nicolini, given the $G_2$-Structure $\varphi_L$ above, a perturbation of the form
\[
\widetilde\varphi=a\varphi_L+b\,d(e^{12})+c\,d(e^{56})
\]
for constants $a,b,c$ is ERP whenever it is positive.
\end{prop}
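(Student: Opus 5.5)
The approach is to reduce the statement to a rescaling argument in the fixed basis $e^1,\dots,e^7$ rather than compute torsion from scratch each time. First, for each of $\mu_B$, $\mu_{M2}$, $\mu_J$, I would write out the structure equations $de^i$ from the Lauret--Nicolini classification. Then I would compute $d(e^{12})=de^1\wedge e^2-e^1\wedge de^2$ and $d(e^{56})$ explicitly.

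The first goal is to check that each of these exact three-forms is a linear combination of the seven monomials $e^{127},e^{347},e^{567},e^{135},e^{146},e^{236},e^{245}$ appearing in $\varphi_L$. I expect this for ERP examples, since $\varphi_L$, $d\tau$ and $\ast(\tau\wedge\tau)$ are all built from these monomials; compare the adapted-frame expression used before Proposition~\ref{prop: GeneralERPPert}. If so, then
\[
\widetilde\varphi=\sum_{\text{monomials }m} c_m(a,b,c)\, m ,
\]
where each $c_m$ is linear in $a,b,c$.

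Positivity should then amount to the existence of nonzero constants $\lambda_1,\dots,\lambda_7$ with $c_{ijk}=\pm\lambda_i\lambda_j\lambda_k$ matching the signs in $\varphi_L$. In that case $\widetilde\varphi=\varphi_L$ written in the rescaled coframe $\tilde e^i=\lambda_i e^i$. I would solve for the $\lambda_i$ explicitly, presumably as in the $f,g$ computation of Proposition~\ref{prop: GeneralERPPert}. This identifies the positivity region with a sign condition on the $c_m$.

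In the coframe $\tilde e^i$, the structure constants of the Lie algebra are rescaled: $d\tilde e^k=\sum \lambda_k\lambda_i^{-1}\lambda_j^{-1}c^k_{ij}\,\tilde e^{ij}$. So $(\mathfrak g,\widetilde\varphi)$ is equivariantly isomorphic to $(\tilde\mu,\varphi_L)$, where $\tilde\mu$ is a diagonally rescaled bracket. The key step is to show that for these three types the rescaled bracket $\tilde\mu$ again satisfies the ERP equation $d\tau=\tfrac16|\tau|^2\varphi+\tfrac16\ast(\tau\wedge\tau)$ with respect to $\varphi_L$. I would do this in one of two ways. One way is to observe that $\tilde\mu$ lies in the same one-parameter family, or is equal to $c\,\mu$ composed with an automorphism, in the Lauret--Nicolini list; ERP is invariant under isomorphisms of $G_2$-structures and under scaling $\varphi\mapsto c^3\varphi$. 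Failing that, I would directly compute the left-invariant torsion $\tau$ of $(\tilde\mu,\varphi_L)$ from $d\psi_L=-\ast\tau$ and verify the ERP identity as a polynomial identity in the $\lambda_i$.

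I expect the main obstacle to be this last step. A general diagonal rescaling need not preserve ERP, since the condition is nonlinear in the structure constants. The point to establish is therefore that the specific $\lambda_i$ produced by $b$ and $c$ only rescale the $P$ and $Q$ blocks, and pieces within $Q$, in a way compatible with the bracket. I would first try to show that the resulting $\tilde\mu$ differs from $\mu$ by a diagonal derivation-type rescaling, i.e.\ an automorphism times an overall scale. This would mirror how Proposition~\ref{prop: GeneralERPPert} showed that $\tilde\tau$ is proportional to $\tau$. If that fails for one type, I would fall back on the direct torsion computation for that type.
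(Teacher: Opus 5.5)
Your proposal is correct, and the route you would try first differs from the paper's. The paper uses only your fallback. For each type it writes $\widetilde\varphi$ as $\varphi_L$ in the coframe $\widetilde e^i=f_ie^i$. It then computes $\widetilde\psi$, $d\widetilde\psi$, $\widetilde\tau$, $d\widetilde\tau$, $|\widetilde\tau|^2_{\widetilde g}$ and $\widetilde\ast(\widetilde\tau\wedge\widetilde\tau)$ by hand, and checks (\ref{erppointwise}) by substitution.

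Your first option succeeds in all three cases. Your $\lambda_i$ must be the paper's $f_i$: the positive solution is unique because the point--line incidence matrix $M$ of the Fano plane satisfies $M^TM=2I+J$, so it is invertible. With these $\lambda_i$, every nonzero structure constant is multiplied by the same factor $\lambda_k/(\lambda_i\lambda_j)=\kappa$. Explicitly, $\kappa=A^{-1/3}B^{-1/3}C^{1/3}$ for $\mu_B$, $\kappa=A^{-1/3}B^{-1/6}C^{1/6}$ for $\mu_{M2}$, and $\kappa=A^{-1/3}$ for $\mu_J$. Hence $h^*e^i=\kappa f_ie^i$ defines a diagonal Lie algebra automorphism and $\widetilde\varphi=\kappa^{-3}h^*\varphi_L$. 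ERP then follows from naturality and scale invariance, with no torsion computation. As a consistency check, the paper's $\widetilde\tau$ is exactly $\kappa(\widetilde e^{12}-\widetilde e^{56})$.

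Your route explains why the result holds. On the simply connected groups these perturbations are homothetic to $\varphi_L$ via automorphisms. Their content therefore lies on quotients such as $\Gamma\backslash G_J$, where $h$ need not preserve $\Gamma$; ERP still descends because it is local. The paper's computation gives $\widetilde\tau$ and the scalar curvature in closed form, though these also follow from your route through $\kappa$.

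Two details remain, and the paper also treats them only in passing. First, you need the ``only if'' half of your positivity criterion. For a form supported on the seven monomials, the bilinear form in (\ref{G2metric}) is diagonal in $e_1,\dots,e_7$. Moreover, $g(e_i,e_i)$ is proportional to the product of the three coefficients on the monomials containing $i$, normalized by the signs of $\varphi_L$. Definiteness therefore forces $A,B,C\neq0$ and a sign pattern realizable by a signed diagonal rescaling. This last step is a short count over $\mathbb{F}_2$: the patterns for which that product has the same sign for every $i$ are exactly the $16$ patterns in the image of $M$.

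Second, if some $\lambda_i<0$, choose the signs so that $\tilde\mu=\pm\kappa\mu$; for instance, take all $\lambda_i<0$ when $A,B,C<0$. The minus sign is harmless because the ERP identity is homogeneous of degree two in the structure constants. Any two admissible sign choices differ by a diagonal element of $G_2$, so the choice does not matter.
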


\begin{proof}
We first consider the type $\mu_B$ corresponding to Bryant's example. In the frame used by Lauret and Nicolini its structure equations are \cite{LauretNicolini2019b}
\begin{align*}
de^7&=0, & de^3&=\frac13e^{37}, & de^4&=\frac13e^{47},\\
de^1&=-\frac16e^{17}, & de^2&=-\frac16e^{27},\\
de^5&=\frac13e^{14}+\frac13e^{23}+\frac16e^{57}, &
de^6&=\frac13e^{13}-\frac13e^{24}+\frac16e^{67}.
\end{align*}
The three-form $\varphi_L$ is ERP on $G_B$. The structure equations give
\[
d(e^{12})=\frac13e^{127},\qquad
d(e^{56})=-\frac13(e^{567}+e^{135}-e^{146}-e^{236}-e^{245}).
\]
Thus
\begin{align*}
\widetilde\varphi
&=\overset{A}{a}e^{347}
+\overset{B}{\left(a+\frac b3\right)}e^{127}\\
&\quad+\overset{C}{\left(a-\frac c3\right)}(e^{567}+e^{135}-e^{146}-e^{236}-e^{245}).
\end{align*}
The form is positive in the component containing $\varphi_L$ when $A$, $B$, and $C$ are positive. The other positive components follow after changing signs in the adapted coframe. Define $\widetilde e^i=f_i e^i$ by
\begin{align*}
f_1=f_2&=A^{-1/6}B^{1/3}C^{1/6}, &
f_3=f_4&=A^{1/3}B^{-1/6}C^{1/6},\\
f_5=f_6&=A^{-1/6}B^{-1/6}C^{2/3}, &
f_7&=A^{1/3}B^{1/3}C^{-1/3}.
\end{align*}
In this coframe $\widetilde\varphi$ has the standard pointwise form $\varphi_L$. Its Hodge dual is
\begin{align*}
\widetilde\psi
&=\frac{B^{1/3}C^{5/3}}{A^{2/3}}e^{1256}
+\frac{A^{1/3}C^{5/3}}{B^{2/3}}e^{3456}\\
&\quad+A^{1/3}B^{1/3}C^{2/3}
(e^{1234}+e^{2357}+e^{1457}+e^{1367}-e^{2467}).
\end{align*}
The structure equations give
\[
d\widetilde\psi
=-\frac{A^{1/3}C^{5/3}}{B^{2/3}}e^{34567}
+A^{1/3}B^{1/3}C^{2/3}e^{12347}.
\]
Using $d\widetilde\psi=-\widetilde\ast\widetilde\tau$ gives
\[
\widetilde\tau
=\frac{B^{1/3}C^{2/3}}{A^{2/3}}e^{12}
-\frac{C^{5/3}}{A^{2/3}B^{2/3}}e^{56}.
\]
Another use of the structure equations gives
\begin{align*}
d\widetilde\tau
&=\frac{B^{1/3}C^{2/3}}{3A^{2/3}}e^{127}\\
&\quad+\frac{C^{5/3}}{3A^{2/3}B^{2/3}}
(e^{567}+e^{135}-e^{146}-e^{236}-e^{245}).
\end{align*}
We also have
\begin{align*}
|\widetilde\tau|^2_{\widetilde g}
&=\frac{2C^{2/3}}{A^{2/3}B^{2/3}},\\
\widetilde\ast(\widetilde\tau\wedge\widetilde\tau)
&=-\frac{2A^{1/3}C^{2/3}}{B^{2/3}}e^{347}.
\end{align*}
Substitution gives
\[
d\widetilde\tau
=\frac16|\widetilde\tau|^2_{\widetilde g}\widetilde\varphi
+\frac16\widetilde\ast(\widetilde\tau\wedge\widetilde\tau).
\]
Thus the perturbation is ERP for the type $\mu_B$.

\medskip
We next consider the simply connected Lie group $G_{M2}$. Its Lie algebra has structure equations
\begin{align*}
de^7&=de^3=0, & de^4&=\frac13e^{47},\\
de^1&=-\frac16e^{13}-\frac13e^{17}, &
de^2&=-\frac13e^{14}+\frac16e^{23}-\frac13e^{35},\\
de^5&=\frac13e^{14}+\frac13e^{23}-\frac16e^{35}, &
de^6&=-\frac13e^{24}+\frac16e^{36}-\frac13e^{45}+\frac13e^{67}.
\end{align*}
The three-form $\varphi_L$ is ERP on $G_{M2}$. The structure equations give
\[
d(e^{12})=\frac13(e^{127}+e^{135}),\qquad
d(e^{56})=-\frac13(e^{567}-e^{146}-e^{236}-e^{245}).
\]
Thus
\begin{align*}
\widetilde\varphi
&=\overset{A}{a}e^{347}
+\overset{B}{\left(a+\frac b3\right)}(e^{127}+e^{135})\\
&\quad+\overset{C}{\left(a-\frac c3\right)}(e^{567}-e^{146}-e^{236}-e^{245}).
\end{align*}
The form is positive in the component containing $\varphi_L$ when $A$, $B$, and $C$ are positive. The other positive components follow after changing signs in the adapted coframe. Define $\widetilde e^i=f_i e^i$ by
\begin{align*}
f_1&=A^{-1/6}B^{2/3}C^{-1/6}, &
f_2=f_5&=A^{-1/6}B^{1/6}C^{1/3},\\
f_3=f_7&=A^{1/3}B^{1/6}C^{-1/6}, &
f_4&=A^{1/3}B^{-1/3}C^{1/3},\\
f_6&=A^{-1/6}B^{-1/3}C^{5/6}.
\end{align*}
In this coframe $\widetilde\varphi$ has the standard pointwise form $\varphi_L$. Its Hodge dual is
\begin{align*}
\widetilde\psi
&=\frac{B^{2/3}C^{4/3}}{A^{2/3}}e^{1256}
 +\frac{A^{1/3}C^{4/3}}{B^{1/3}}(e^{3456}-e^{2467})\\
&\quad+A^{1/3}B^{2/3}C^{1/3}(e^{1234}+e^{2357}+e^{1457}+e^{1367}).
\end{align*}
The structure equations give
\[
d\widetilde\psi=-\frac{A^{1/3}C^{4/3}}{B^{1/3}}e^{34567}+A^{1/3}B^{2/3}C^{1/3}e^{12347}.
\]
Using $d\widetilde\psi=-\widetilde\ast\widetilde\tau$ gives
\[
\widetilde\tau=\frac{B^{2/3}C^{1/3}}{A^{2/3}}e^{12}-\frac{C^{4/3}}{A^{2/3}B^{1/3}}e^{56}.
\]
Another use of the structure equations gives
\begin{align*}
d\widetilde\tau
&=\frac{B^{2/3}C^{1/3}}{3A^{2/3}}(e^{127}+e^{135})\\
&\quad+\frac{C^{4/3}}{3A^{2/3}B^{1/3}}(e^{567}-e^{146}-e^{236}-e^{245}).
\end{align*}
We also have
\begin{align*}
|\widetilde\tau|^2_{\widetilde g}&=\frac{2C^{1/3}}{A^{2/3}B^{1/3}},\\
\widetilde\ast(\widetilde\tau\wedge\widetilde\tau)&=-\frac{2A^{1/3}C^{1/3}}{B^{1/3}}e^{347}.
\end{align*}
Substitution gives
\[
d\widetilde\tau=\frac16|\widetilde\tau|^2_{\widetilde g}\widetilde\varphi+\frac16\widetilde\ast(\widetilde\tau\wedge\widetilde\tau).
\]
Thus the perturbation is ERP for the type $\mu_{M2}$.

\medskip
We finally consider the simply connected Lie group $G_J$. Its Lie algebra has structure equations
\begin{align*}
de^7&=de^3=de^4=0,\\
de^1&=\frac{\sqrt2}{6}e^{14}-\frac16e^{17}-\frac{\sqrt2}{6}e^{23}-\frac13e^{36},\\
de^2&=-\frac{\sqrt2}{6}e^{13}-\frac{\sqrt2}{6}e^{24}-\frac16e^{27}+\frac13e^{46},\\
de^5&=\frac12e^{57}, & de^6&=\frac13e^{13}-\frac13e^{24}-\frac16e^{67}.
\end{align*}
The three-form $\varphi_L$ is ERP on $G_J$. The structure equations give
\[
d(e^{12})=\frac13(e^{127}-e^{146}-e^{236}),\qquad d(e^{56})=-\frac13(e^{567}+e^{135}-e^{245}).
\]
Thus
\begin{align*}
\widetilde\varphi
&=\overset{A}{a}e^{347}
+\overset{B}{\left(a+\frac b3\right)}(e^{127}-e^{146}-e^{236})\\
&\quad+\overset{C}{\left(a-\frac c3\right)}(e^{567}+e^{135}-e^{245}).
\end{align*}
The form is positive in the component containing $\varphi_L$ when $A$, $B$, and $C$ are positive. The other positive components follow after changing signs in the adapted coframe. Define $\widetilde e^i=f_i e^i$ by
\begin{align*}
f_1=f_2=f_6&=A^{-1/6}B^{1/2}, & f_3=f_4=f_7&=A^{1/3},\\
f_5&=A^{-1/6}B^{-1/2}C.
\end{align*}
In this coframe $\widetilde\varphi$ has the standard pointwise form $\varphi_L$. Its Hodge dual is
\begin{align*}
\widetilde\psi
&=\frac{BC}{A^{2/3}}e^{1256}+A^{1/3}C(e^{3456}+e^{2357}+e^{1457})\\
&\quad+A^{1/3}B(e^{1234}-e^{2467}+e^{1367}).
\end{align*}
The structure equations give
\[
d\widetilde\psi=-A^{1/3}Ce^{34567}+A^{1/3}Be^{12347}.
\]
Using $d\widetilde\psi=-\widetilde\ast\widetilde\tau$ gives
\[
\widetilde\tau=\frac{B}{A^{2/3}}e^{12}-\frac{C}{A^{2/3}}e^{56}.
\]
Another use of the structure equations gives
\begin{align*}
d\widetilde\tau
&=\frac{B}{3A^{2/3}}(e^{127}-e^{146}-e^{236})\\
&\quad+\frac{C}{3A^{2/3}}(e^{567}+e^{135}-e^{245}).
\end{align*}
We also have
\begin{align*}
|\widetilde\tau|^2_{\widetilde g}&=\frac{2}{A^{2/3}},\\
\widetilde\ast(\widetilde\tau\wedge\widetilde\tau)&=-2A^{1/3}e^{347}.
\end{align*}
Substitution gives
\[
d\widetilde\tau=\frac16|\widetilde\tau|^2_{\widetilde g}\widetilde\varphi+\frac16\widetilde\ast(\widetilde\tau\wedge\widetilde\tau).
\]
Thus the perturbation is ERP for the type $\mu_J$.
\end{proof}

\begin{rmk}
Kath and Lauret construct a lattice $\Gamma\subset G_J$ such that $\Gamma\backslash G_J$ is compact \cite{KathLauret2020}. The left-invariant forms $e^{12}$ and $e^{56}$ descend to this quotient. Therefore every positive perturbation
\[
a\varphi_L+b\,d(e^{12})+c\,d(e^{56})
\]
defines an ERP $G_2$-Structure on the compact quotient. When $a=1$ these perturbations lie in the cohomology class of $\varphi_L$. The choices with $b\neq -c$ are not perturbations by a multiple of $d\tau$ since $\tau=e^{12}-e^{56}$.
\end{rmk}

\medskip
 \subsection*{Acknowledgments} I would like to thank Robert Bryant, Anna Fino, and Jeffrey Streets for helpful comments related to this paper. I would also like to thank Jessica Li for not pinching me too extremely frequently while I studied these Extremally Ricci-Pinched $\GG_2$-Structures. \\

I used ChatGPT for latex formatting, equation referencing, and grammar at various parts of the paper.

\bibliographystyle{plain}
\bibliography{bibLibrary}

\medskip

\noindent Rowland Hall, University of California, Irvine, Irvine, CA 92617

\noindent \textit{Email address:} \texttt{kennona@uci.edu}

\end{document}